\newtheorem{thm}{Theorem}[section]
\newtheorem{lem}[thm]{Lemma}
\newtheorem{cor}[thm]{Corollary}
\newtheorem{prop}[thm]{Proposition}
\newtheorem{note}[thm]{Note}
\theoremstyle{definition}
\newtheorem{defn}[thm]{Definition}
\newtheorem{example}[thm]{Example}
\newtheorem{prop-defn}[thm]{Proposition-Definition}
\newtheorem*{claim*}{Claim}
\newtheorem*{ack*}{Acknowledgements}
\newtheorem*{ex*}{Example}
\renewcommand{\vec}[1]{\mathbf{#1}}
\newtheoremstyle{named}{}{}{\itshape}{}{\bfseries}{.}{.5em}{#3}
\theoremstyle{named}
\newtheorem*{namedtheorem}{Corollary}
\title{A Transversal for horocycle flow on $\mathcal{H}(2)$}
\author{Grace Work}
\address{\tt Department of Mathematics, University of Illinois at
 Urbana-Champaign, 1409 West Green Street, Urbana, IL 61801, USA
\newline http://www.math.uiuc.edu/\~{}work2} \email{\tt work2@illinois.edu}
\thanks{\today}
\begin{document}

\thanks{The author acknowledges support from U.S. National Science Foundation grants DMS 1107452, 1107263, 1107367 ``RNMS: GEometric structures And Representation varieties" (the GEAR Network).}

\maketitle
	
\begin{abstract} Using zippered rectangle coordinates we parametrize a Poincar\'e section for horocycle flow on the space of genus 2 translation surfaces with one singular cone point of angle $6\pi$. In addition, we bound the return time under horocycle flow to this Poincar\'e section by examining a subset of surfaces where a certain sum of parameters is large.
\end{abstract}

\section{Introduction}

\subsection{A transversal to horocycle flow on moduli space}

Let $\mathcal{H}(\alpha)$ be a stratum of the moduli space of genus $g$ translation surfaces, where a genus $g$ translation surface is a pair $(S, \omega)$ where $S$ is a genus $g$ Riemann surface and $\omega$ a holomorphic one-form. Here, $\alpha$ is an integer partition of $2g-2$ which gives the orders of the zeros of $\omega$. A translation surface can be realized as a collection of polygons in the plane with parallel sides identified by translations, and a zero of order $k$ for $\omega$ corresponds to a cone point of the flat metric induced by the polygons with angle $2\pi(k+1)$. There is an $SL(2, \mathbb{R})$ action on this stratum, and, if the stabilizer of a given surface under this action is a lattice, we call the surface a \emph{lattice surface}, or \emph{Veech surface}; generically the stabilizer is trivial.

A \emph{saddle connection} on a translation surface is a straight line trajectory, $\gamma$, connecting two, not necessarily distinct, cone points, with no cone points in its interior. To each saddle connection we can associate a holonomy vector $\mathbf{v}_\gamma$ that records how far $\gamma$ travels in the horizontal and the vertical direction. The angles of holonomy vectors associated to saddle connections of length at most $R$ have been proven, by Veech in the lattice surface case [12] and by Vorobets for almost every surface [13], to equidistribute (as $R \rightarrow \infty$) with respect to Lebesgue measure on the circle. If we consider the set of holonomy vectors with slopes in between 0 and 1, and horizontal component at most $R$, then these also equidistribute with respect to Lebesgue measure on $(0, 1)$ as $R \rightarrow \infty$, by work of Athreya [1].

The finer statistic of the distribution of the gaps between consecutive elements in the sequence of slopes of saddle connections has been studied in the case of lattice surfaces. First by Athreya and Cheung who examined the case of the torus \cite{AC13}, then by Athreya, Chaika, and Leli\`evre in the case of the double pentagon \cite{ACL}, and finally by the author and Uyanik in the case of the octagon and general Veech surfaces \cite{UW}. These results follow a strategy of proof outlined by Athreya involving translating the question of the gap distribution into a dynamical question of return times of \emph{horocycle flow} to a specific \emph{Poincar\'e section}, defined to be the set of all surfaces with a horizontal saddle connection of length $\leq 1$ \cite{Ath13}. Computing the distribution of the return time to this Poincar\'e section yields the gap distribution for slopes. In joint work with Uyanik we describe this Poincar\'e section for all lattice surfaces \cite{UW}.

Athreya and Chaika showed for almost every surface, the angle gap distribution exists (and is constant on this full measure set), and that it has support at zero \cite{AC12}. Moreover, Athreya showed that the analagous statement for slope gap distributions holds by studying transversals to the horocycle flow \cite{Ath13}. We show

\begin{thm}
There is a Poincar\'e section for horocycle flow on the moduli space $\mathcal{H}_1(\alpha)$, the space of area one surfaces in $\mathcal{H}(\alpha)$, parametrized by a union of $n$ polytopes, where $n$ is the cardinality of the associated Rauzy class.
\end{thm}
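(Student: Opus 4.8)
The plan is to realize the Poincaré section --- the set of area-one surfaces in $\mathcal{H}_1(\alpha)$ possessing a horizontal saddle connection of length at most $1$ --- explicitly in terms of \emph{zippered rectangle coordinates} (in the sense of Veech), which parametrize a translation surface by the combinatorial data of an interval exchange transformation together with a vector of widths $\lambda$ and a vector of heights (or equivalently suspension data) $\tau$. First I would fix a Rauzy class $\mathcal{R}$ associated to the stratum $\mathcal{H}(\alpha)$; each permutation $\pi$ in $\mathcal{R}$ gives one combinatorial type, and the surfaces with that type form a chart. The key observation is that demanding a \emph{horizontal} saddle connection of length at most $1$, suitably normalized so that it is the ``first'' one in a canonical sense, pins down the data so that the width vector lives in a simplex-like region: the area-one condition $\langle \lambda, h(\lambda,\tau)\rangle = 1$ is one polynomial (in fact bilinear) constraint, the length bound gives a linear inequality, and the positivity/consistency conditions on $\lambda$ and $\tau$ (the Veech conditions guaranteeing that the zippered rectangle construction actually closes up to a genuine surface in the stratum) are finitely many further linear inequalities. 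Together these cut out a polytope $P_\pi$ for each $\pi \in \mathcal{R}$.

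The steps, in order, are: (i) recall the zippered rectangle construction and state precisely the inequalities on $(\lambda,\tau)$ that characterize points of $\mathcal{H}(\alpha)$ in a given chart, following Veech and Zorich; (ii) identify the normalization: given a surface in the section, rotate/rescale so the distinguished horizontal saddle connection becomes the bottom edge of the zippered rectangle picture, and check that this normalization is well-defined (i.e.\ that it selects a unique combinatorial type and a unique point in the chart, up to the finite ambiguity already accounted for by summing over $\mathcal{R}$); (iii) impose the area-one normalization and the length-$\le 1$ condition and observe that, after the bilinear area constraint is used to eliminate one coordinate (or to scale), the remaining region is genuinely a polytope --- cut out by finitely many linear inequalities --- in the $\tau$ and rescaled-$\lambda$ variables; (iv) verify that the union over $\pi \in \mathcal{R}$ of these polytopes $P_\pi$ is exactly the section, with overlaps only on measure-zero boundary strata, and that horocycle flow (upper-triangular unipotent action) is transverse to it, so it is a bona fide Poincaré section in the sense of Athreya; (v) conclude that $n = |\mathcal{R}|$ is the number of pieces.

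I expect the main obstacle to be step (ii)--(iii): making the ``distinguished horizontal saddle connection'' genuinely canonical. On a translation surface there may be several horizontal saddle connections of length $\le 1$, and one must choose a rule (e.g.\ the one emanating in a fixed direction from a fixed prong of the cone point, or the shortest, or the ``first from the left'' in the rectangle decomposition) and then show that this choice (a) always produces a valid zippered rectangle chart, and (b) makes the length condition translate into a \emph{linear} constraint rather than something more complicated --- in zippered rectangle coordinates the natural horizontal saddle connection has length equal to a single coordinate $\lambda_i$ or to $|\lambda| = \sum \lambda_i$ depending on the convention, so the bound is linear, but proving that the canonical choice always lands on such an edge, uniformly across the chart, is the delicate point. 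A secondary subtlety is handling the area-one constraint: since area is bilinear in $(\lambda,\tau)$, one should either work projectively (fixing $|\lambda|=1$ and letting area float, then rescaling) or carefully argue that fixing area $=1$ still leaves a polytope after eliminating the dependent variable; I would take the former route, since scaling commutes nicely with the combinatorics and keeps every defining relation linear.
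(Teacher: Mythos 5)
Your proposal follows essentially the same route as the paper: one zippered-rectangle chart per permutation $\pi$ in the Rauzy class, with the Veech--Zorich consistency conditions on $(a,h,\lambda)$, the linear length bound $\sum\lambda_i\le 1$, and the area normalization cutting out each of the $n$ pieces. The paper resolves the canonicity issue you flag in step (ii) simply by taking the transversal to be the \emph{longest} horizontal saddle connection of length $\le 1$, and it handles the bilinear area constraint by solving for one of the height/altitude coordinates rather than by your projective rescaling, but these are minor differences of implementation rather than of approach.
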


We will provide an explicit description of this Poincar\'e section the case of $\mathcal{H}_1(2)$ and give bounds for the return time by considering a subset of these surfaces. These are the first results leading towards an explicit computation for distribution of the gaps between slopes of holonomy vectors associated to saddle connections the case of a generic surface.

\subsection{Translation surfaces}
We now review the necessary background on translation surfaces and refer the reader to \cite{HS, Masur, MT, Zorich} for a more detailed introduction. A \emph{translation surface} is a pair $(S, \omega)$ where $S$ is a Riemann surface and $\omega$ is a holomorphic one-form. Equivalently, one can form a translation surface by identifying parallel sides of a collection of polygons embedded in the plane by translations.

Every translation surface has three pieces of associated topological data: the genus, the set of zeros, and the multiplicity of its singularities. This data can be represented by a vector $\alpha = (\alpha_1, \ldots, \alpha_k)$ where $\alpha_i$ is the order of the $i$th zero and corresponds to a point of total angle $2\pi(\alpha_i + 1)$. in addition, the $\alpha_i$ satisfy the following equality
\[\sum_{i=1}^k \alpha_i = 2g - 2\]

Two translation surfaces are distinct if they differ by a nontrivial rotation, that is $(S, \omega)$ and $(S, e^{i\theta}\omega)$ are typically not equivalent if $\theta \neq 0$. We denote by $\mathcal{H}(\alpha)$ the \emph{stratum} of translation surfaces with topological data $\alpha$ and by $\mathcal{H}_1(\alpha)$ the set of translation surfaces in $\mathcal{H}(\alpha)$ with area 1.

There is a natural $SL(2, \mathbb{R})$ action on the stratum, given a matrix $A \in SL(2, \mathbb{R})$ and a translation surface defined by the polygons $\{P_1, \ldots, P_k\}$ we obtain a new translation surface defined by $\{AP_1, \ldots, AP_k\}$. The stabilizer of a given translation surface, $(S, \omega)$, under this action is called the \emph{Veech group}, and is denoted by $SL(S, \omega)$. If this stabilizer is a lattice, that is if $SL(2, \mathbb{R})/SL(S, \omega)$ has finite volume, the surface is called a \emph{lattice surface}, or a \emph{Veech surface}. Generically this stabilizer is trivial. The regular octagon with parallel sides identified is an example of a lattice surface in $\mathcal{H}(2)$ and its Veech group is isomorphic to $\triangle(4, \infty, \infty)$.

A straight line trajectory on these surfaces connecting two, not necessarily distinct, cone points and containing no cone points in its interior, is called a \emph{saddle connection}. To each saddle connection, $\gamma$, we can associate a \emph{holonomy vector}, $\vec{v}_\gamma = \int_\gamma\omega \in \mathbb{C}$. The set of holonomy vectors is a discrete subset of $\mathbb{R}^2$

\subsection{Gap distributions}
Studying the distribution of the gaps between elements of an equidistributed sequence provides a finer test for randomness. Were the sequence truly random we would expect an exponential distribution, as is the case with independent, identically distributed, uniform on $[0,1]$ random variables. One of the first uses of this test was in 1970, when R. R. Hall studied the gap distribution for the Farey sequence \cite{Hall70}. Later Elkies and McMullen studied the distribution of gaps in the sequence $\sqrt{n}\mod 1$ using techniques from homogeneous dynamics, and dubbed these non-exponential distributions ``exotic'' \cite{EM}.

In 2013 Athreya and Cheung studied slopes of straight line trajectories on a square with parallel sides identified, this question is equivalent to examining the Farey sequence. Their proof used the ergodic theory of the horocycle flow, where \emph{horocycle flow} is defined to be the action of the matrix
\[h_s = \begin{bmatrix} 1 & 0 \\ -s & 1\end{bmatrix} \in SL(2, \mathbb{R}).\]
They were able to obtain Hall's distribution by computing the distribution of the return time of horocycle flow to a specific Poincar\'e section  \cite{AC13}. In 2014 Athreya, Chaika, and Leli\`{e}vre applied the same strategy to compute the distribution of gaps between slopes of saddle connections on the golden $L$, a translation surface in $\mathcal{H}(2)$ with Veech group $\triangle(2, 5, \infty)$ \cite{ACL}. This led to considering surfaces whose Veech group had more than one cusp. The first example of this type was the octagon, a Veech surface in $\mathcal{H}(2)$ with Veech group $\triangle(4, \infty, \infty)$, computed by the author and Uyanik in 2015, where it was also proven that the Poincar\'{e} section for a general Veech surface whose Veech group has $n$ cusps can be parametrized by a union of $n$ triangles and the gap distribution will be piecewise real analytic \cite{UW}.

\begin{ack*}
We would like to thank Jayadev Athreya for proposing this problem, and Alex Wright for suggesting zippered rectangle coordinates as a natural parametrization for the transversal, as well as Vincent Delecroix, Samuel Leli\`evre, and Ronen Mukamel for useful discussions and help with code, and MSRI for its hospitality during the Spring 2015 semester. 
\end{ack*}

\section{Constructing the Poincar\'e Section}

A key step in the computation of the gap distribution using the ergodicity of the horocycle flow is to the find a good parametrization of the Poincar\'{e} section, that is, the set of surfaces in $\mathcal{H}_1(\alpha)$ with a short, length $\leq 1$, horizontal saddle connection. To do this we must first construct coordinates on this space.

\subsection{Zippered rectangles}
We begin by reviewing the zippered rectangle construction by Veech, \cite{Vee82}. Every surface in $\mathcal{H}_1(\alpha)$ has a zippered rectangle representation by constructing a suspension over an interval exchange transformation (IET), built by finding the first return map of vertical flow to a given transversal, in our case the longest horizontal saddle connection of length $\leq 1$. We show a construction in Figure \ref{ZR}, for a surface $S \in \mathcal{H}_1(2)$, here the transversal was chosen to be the saddle connection from $a_0$ to $a_4$. Associated to each zippered rectangle is a vector $\lambda$, indicating the lengths of the intervals, and a permutation $\pi$ corresponding to the interval exchange transformation. In addition there are the vectors $h$, the heights of the rectangles, and $a$, the altitudes of the cone points. In order for the zippered rectangle to be valid the vectors $a$ and $h$ must satisfy the following: \cite{Z96}
\begin{align}
h_i - a_i &= h_{\sigma(i) + 1} - a_{\sigma(i)} & (0 \leq i \leq m)\\
h_i, a_i &\geq 0 &(1 \leq i \leq m)\\
h_m \geq a_m &\geq -h_{\pi^{-1}m}&\\
h_{\pi^{-1}m+1} & \geq a_{\pi^{-1}m} &\\
\min(h_i, h_{i+1}) & \geq a_i & (0 < i < m, i \neq \pi^{-1}m)
\end{align}

where $m$ is the number of rectangles and

\[ \sigma(j) = \left\{\begin{array}{l l} \pi^{-1}(1) -1 & j = 0\\ m & j = \pi^{-1}(m) \\ \pi^{-1}(\pi(j)+1)-1 & \text{otherwise}\end{array}\right..\]

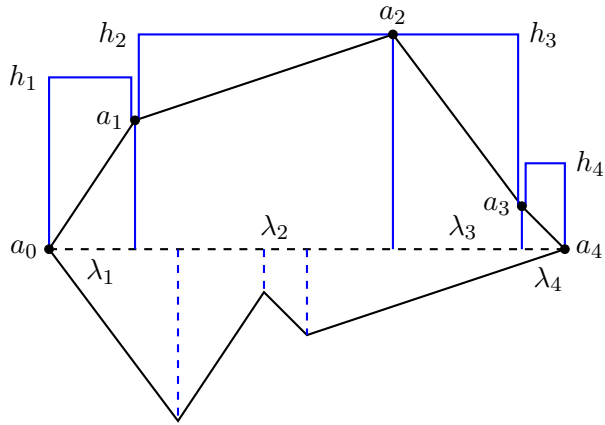
\begin{figure}[ht!]\label{ZR}
\begin{center}
\begin{tikzpicture}
\coordinate (A0) at (0,0);
\node at (A0) [left = .1mm of A0] {$a_0$};
\coordinate (H11) at (0,4/1.75);
\node at (H11) [left = .1mm of H11] {$h_1$};
\coordinate (H12) at (2/1.75-1/20,4/1.75);
\coordinate (A11) at (2/1.75-1/20,3/1.75);
\coordinate (A1) at (2/1.75,3/1.75);
\node at (A1) [left = .1mm of A1] {$a_1$};
\coordinate (A12) at (2/1.75+1/20,3/1.75);
\coordinate (H2) at (2/1.75+1/20,5/1.75);
\node at (H2) [left = .1mm of H2] {$h_2$};
\coordinate (A2) at (8/1.75,5/1.75);
\node at (A2) [above = .1mm of A2] {$a_2$};
\coordinate (H3) at (11/1.75-1/20,5/1.75);
\node at (H3) [right = .1mm of H3] {$h_3$};
\coordinate (A31) at (11/1.75-1/20,1/1.75);
\coordinate (A3) at (11/1.75,1/1.75);
\node at (A3) [left = .1mm of A3] {$a_3$};
\coordinate (A32) at (11/1.75+1/20,1/1.75);
\coordinate (H41) at (11/1.75+1/20,2/1.75);
\coordinate (H42) at (12/1.75,2/1.75);
\node at (H42) [right = .1mm of H42] {$h_4$};
\coordinate (A4) at (12/1.75,0);
\node at (A4) [right = .1mm of A4] {$a_4$};
\coordinate (L1) at (2/1.75,0);
\coordinate (L2) at (8/1.75,0);
\coordinate (L3) at (11/1.75,0);
\coordinate (B1) at (3/1.75,0);
\coordinate (B2) at (5/1.75,0);
\coordinate (B3) at (6/1.75,0);
\coordinate (D1) at (3/1.75,-4/1.75);
\coordinate (D2) at (5/1.75,-1/1.75);
\coordinate (D3) at (6/1.75,-2/1.75);

\node at (11/16,0) [below] {$\lambda_1$};
\node at (3,0) [above] {$\lambda_2$};
\node at (4+3/2,0) [above] {$\lambda_3$};
\node at (11/1.75+.35,-.1) [below] {$\lambda_4$};

\draw[thick] (A0) -- (A1) -- (A2) -- (A3) -- (A4) -- (D3) -- (D2) -- (D1) -- cycle;

\draw[thick, dashed] (A0) -- (A4);

\foreach \i in {1, 2, 3}
{
\draw[thick,dashed,blue] (B\i) -- (D\i);
\draw[thick,blue] (L\i) -- (A\i);
}

\draw[thick,blue] (A0) -- (H11) -- (H12) -- (A11);
\draw[thick,blue] (A12) -- (H2) -- (A2) -- (H3) -- (A31);
\draw[thick,blue] (A32) -- (H41) -- (H42) -- (A4);

\foreach \L in {A0, A1, A2, A3, A4}
{
\draw[fill=black] (\L) circle (0.15em);
}
\end{tikzpicture}
\end{center}
\caption{Constructing a zippered rectangle with permutation $\pi = (3142)$ and coordinates $a=(a_0, a_1, a_2, a_3, a_4)$, $h =(h_1, h_2, h_3, h_4)$, and $\lambda=(\lambda_1,\lambda_2,\lambda_3,\lambda_4)$, from a polygon representation of a surface.}
\end{figure}

The genus and number of singularities of a translation surface depend only on the Rauzy class of the permutation, thus there are only $n$ possibilities for the permutation $\pi$, where $n$ is the cardinality of the associated Rauzy class. For example, in the stratum $\mathcal{H}_1(2)$, there are 7 possibilities for $\pi$, those in the Rauzy class of $(4321)$.

\subsection{Constructing coordinates}
Consider a translation surface $S$ in $\mathcal{H}_1(\alpha)$, $\alpha = (\alpha_1, \ldots, \alpha_k)$. If, as in our case, we choose the transversal, $X$, to be a horizontal saddle connection, then the IET $T : X\rightarrow X$ induced by the first return of the vertical flow on $S$ to $X$ has the minimal possible number, $m = 2g + k -1$, of subintervals under exchange. Since the endpoints of $X$ coincide with cone points, an interior point $x \in X$ will be a point of of discontinuity for $T$ only if it intersects a cone point in forward time under the vertical trajectory before returning to $X$. For a cone point having angle $2\pi(\alpha_i+1)$, there are $\alpha_i+1$ vertical trajectories that will meet it, and thus these correspond to $\alpha_i+1$ points of discontinuities on $X$. We will have $\sum_{i=1}^k(\alpha_i + 1)$ points of discontinuity on $X$ and therefore $m = \sum_{i=1}^n(\alpha_i + 1) + 1$ subintervals. Using the Gauss-Bonnet formula, $\sum_{i=1}^n\alpha_i= 2g-2$ we get that $m = 2g + n -1$ \cite{Zorich}. Thus, for $M \in \mathcal{H}_1(2)$ and $X$ a horizontal saddle connection of length $\leq 1$, we have that $X$ has $m= 2\cdot 2 + 1 - 1 = 4$ subintervals under the exchange.

The coordinates from Zorich \cite{Z96} give $(a, h, \lambda)$ where $a \in \mathbb{R}^{m+1}$ and $h, \lambda \in \mathbb{R}^m$. Let $d = \text{dim}_{\mathbb{R}}\mathcal{H}(\alpha) = 2m$. Since we are working in the space $\mathcal{H}_1(\alpha)$ and have restricted $X$ to be a horizontal saddle connection, we should be able to find $d-2$ independent coordinates. The coordinates $h$ and $a$ satisfy (1) -- (5) for $0 \leq i \leq m$, and we use the dummy components $h_0 = h_{m+1} = a_0 = 0$. We are thus able to rewrite each element in $h$ in terms of $a$.

Restricting to area one surfaces also gives the equation
\[\sum\lambda_ih_i = 1\]
and requiring the surface to have a horizontal saddle connection of length $< 1$, gives the equality $a_m = 0$ and the inequality
\[\sum\lambda_i < 1\]
This leaves $d-2$ coordinates: $(a_2, \ldots, a_{m-1}; \lambda)$.

\begin{example}
Let $\pi = (4321)$. Then $\sigma(j) = j+3 \mod 5$ and we get the following equalities:
\begin{alignat*}{5}
h_0 - a_0 & =  h_4 - a_3  =  0  \hspace{10pt} &\Longrightarrow&  \hspace{10pt} h_4 = a_3\\
h_1 - a_1 & =  h_5 - a_4  =  0  \hspace{10pt}&\Longrightarrow& \hspace{10pt}  h_1 = a_1\\
h_2 - a_2 & =  h_1 - a_0  =  h_1   \hspace{10pt}&\Longrightarrow& \hspace{10pt}  h_2 = a_1 + a_2\\
h_3 - a_3 & =  h_2 - a_1       \hspace{10pt}&\Longrightarrow& \hspace{10pt}  h_3 = a_3 + a_2\\
h_4 - a_4 & =  h_3 - a_2  =  h_4  &&
\end{alignat*}
Now we have that $\sum_{i=1}^m h_i\lambda_i = 1$ so we can write
\[h_1 = \frac{1 - \sum_{i=2}^m h_i\lambda_i}{\lambda_1}\]
Substituting in the $a_i$'s using the above equalities and solving for $a_1$, gives
\[a_1 = \frac{1-a_2\lambda_2 - (a_3+a_2)\lambda_3 - a_3\lambda_4}{\lambda_1 + \lambda_2}\]
\end{example}

\begin{namedtheorem}[Theorem 1.1]
The Poincar\'e section for horocycle flow on the moduli space $\mathcal{H}_1(\alpha)$ can be parametrized by a union of $n$ polytopes, where $n$ is the cardinality of the associated Rauzy class.
\end{namedtheorem}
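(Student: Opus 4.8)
The plan is to realize the section via Veech's zippered rectangle construction and to check that, after trading one normalization for another, its defining conditions become linear. Recall that every surface of $\mathcal{H}_1(\alpha)$ is a zippered rectangle over an interval exchange whose combinatorial datum is a permutation $\pi$ in a fixed Rauzy class of cardinality $n$, together with continuous data $(\lambda,h,a)$ subject to $(1)$--$(5)$ and the area normalization $\sum_i \lambda_i h_i = 1$. First I would single out the zippered rectangles representing surfaces in the section: choosing the base interval $X$ to be the distinguished horizontal saddle connection --- the longest one of length $\le 1$ --- makes $X$ a saddle connection, which forces $a_m = 0$, a consequence of $(1)$ as the Example illustrates, while $|X| \le 1$ is exactly $\sum_i \lambda_i \le 1$. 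Thus the section is $\bigcup_\pi \Omega_\pi$, where $\Omega_\pi$ consists of the valid zippered-rectangle data for $\pi$ with $a_m = 0$, $\sum_i \lambda_i \le 1$, and area one.

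Now fix $\pi$. The relations $(1)$, together with the conventions $h_0 = h_{m+1} = a_0 = 0$, express $h$ linearly in $(a_1,\dots,a_{m-1})$, the identity $a_m = 0$ falling out in the process; after substituting, conditions $(2)$--$(5)$ become linear and \emph{homogeneous} in $a \in \mathbb{R}^{m-1}$. The one non-linear condition left is the area constraint $\sum_i \lambda_i h_i = 1$, which is bilinear in $(a,\lambda)$ --- but it is merely a way of normalizing the scaling $a \mapsto t a$, $t>0$, which multiplies the area by $t$. On the section this scaling can instead be fixed by the \emph{linear} condition $\sum_i a_i = 1$: each surface in the section corresponds, uniquely up to this scaling, to data with altitude vector on the unit simplex, and one recovers the area-one representative by the (unique, since area is linear in $a$) rescaling of $a$. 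With this substitution, $\Omega_\pi$ is parametrized by the set of $(a,\lambda)$ satisfying $(2)$--$(5)$, $\lambda_i > 0$, $\sum_i\lambda_i \le 1$ and $\sum_i a_i = 1$ --- a region cut out by finitely many linear (in)equalities inside an affine subspace, hence a polytope, and bounded because $a_i \ge 0$ while $\sum_i a_i = 1$ --- of dimension $(m-2)+m = 2m-2 = \dim\mathcal{H}_1(\alpha)-1$. Taking the union over the $n$ permutations $\pi$ gives the theorem.

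The main obstacle is the first step: showing that (surface in the section) $\mapsto$ (zippered-rectangle data) is a faithful parametrization. One must verify that the distinguished saddle connection determines the zippered rectangle uniquely, that conversely each $\Omega_\pi$ maps into the section, and that the pieces for distinct $\pi$ overlap only in a set of lower dimension --- which requires care with the ``longest horizontal saddle connection of length $\le 1$'' convention and with surfaces carrying several competing short horizontal saddle connections, handled either by restricting to the full-measure set on which the convention is unambiguous, or by working with marked pairs (surface, horizontal saddle connection of length $\le 1$) and confirming this is still a cross-section for $h_s$. A related subtlety worth noting: in the coordinates $(a_2,\dots,a_{m-1};\lambda)$ of the Example, where the area constraint is solved for $a_1$, the set $\Omega_\pi$ retains a residual bilinear inequality coming from $a_1 \ge 0$, so its polytope structure is apparent only after the linear renormalization $\sum_i a_i = 1$ of the altitudes used above; it is worth ensuring that it is this polytopal parametrization that is carried into the subsequent return-time estimates.
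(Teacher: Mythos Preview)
Your overall strategy coincides with the paper's: build zippered-rectangle coordinates over the longest horizontal saddle connection of length $\le 1$, observe that $a_m=0$ and $\sum_i\lambda_i\le 1$ encode the section condition, and take the union over the permutations in the Rauzy class. The paper's proof is brief and essentially asserts that the resulting pieces are polytopes without engaging with the bilinearity of the area constraint; it works in the coordinates $(a_2,\dots,a_{m-1};\lambda)$ obtained by solving $\sum_i\lambda_ih_i=1$ for $a_1$, and in the explicit $\mathcal H(2)$ descriptions (Section~\ref{PSRTB}) simply lists the area equation alongside the linear inequalities. Your renormalization by $\sum_i a_i=1$ in place of the area condition is a genuine sharpening: since the $h_i$ are linear homogeneous in $a$ and the constraints $(2)$--$(5)$ are scale-invariant, this trade makes the ``polytope'' claim literally true rather than informal, and your observation about the residual bilinear inequality in the paper's chosen coordinates is correct. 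Likewise, your cautions about injectivity of the parametrization and the overlap of the $\Omega_\pi$ are more explicit than what the paper records; the paper handles the first direction by the ``choose the longest'' convention and the converse by noting $a_m=0$, $a_i>0$ for $i\neq m$, but does not discuss overlaps or measure-zero ambiguities.
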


\begin{proof}
Let $M \in \mathcal{H}_1(\alpha)$ have a horizontal saddle connection of length $\leq 1$. If there is more than one such saddle connection, we will choose the longest. Constructing the suspension over the IET built by finding the first return map of vertical flow to a given transversal will produce a zippered rectangle described by the given inequalities and with coordinates lying in one of the $n$ polytopes corresponding to the associated permutation $\pi$.

Let $Z$ be an element in the Poincar\'e section, since $\pi$ is in the associated Rauzy class, it will correspond to a translation surface, $M \in \mathcal{H}_1(\alpha)$. Since $a_m = 0$ and $a_i > 0$ for $i \neq m$, the transversal will be a saddle connection in $M$, and with the restriction $\sum_{i}\lambda_i \leq 1$, it will have length $\leq 1$.

The return time function is calculated by finding the saddle connection with horizontal component $\leq 1$ and smallest slope. The slope is a rational function in these coordinates, and thus so is the return time function.
\end{proof}

\begin{cor}
The Poincar\'e section, $\Omega$, for horocycle flow on $\mathcal{H}(2)$ is parametrized by a union of 7 polytopes described by linear inequalities in the coordinates $(a_2, a_3; \lambda_1, \lambda_2, \lambda_3, \lambda_4)$ and the return time function is piecewise rational in these coordinates.
\end{cor}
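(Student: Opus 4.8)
The plan is to read the corollary off of Theorem~1.1 by specializing every ingredient of its proof to $\alpha=(2)$. First I would record the discrete data: for $S\in\mathcal{H}_1(2)$ one has $g=2$ and $k=1$, so the first-return IET to a horizontal saddle connection has $m=2g+k-1=4$ subintervals, the stratum has real dimension $d=2m=8$, and the reduced coordinate count is $d-2=6$; by Section~2.2 these six coordinates are exactly $(a_2,a_3;\lambda_1,\lambda_2,\lambda_3,\lambda_4)$, with the conventions $a_0=a_4=0$ and with $a_1$ a dependent quantity. This pins down the coordinate system named in the statement.

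Second, I would determine $n$. Since the genus and singularity type are invariants of the Rauzy class of $\pi$, the admissible permutations for $\mathcal{H}_1(2)$ are precisely the members of the Rauzy class of $(4321)$; running the two Rauzy induction moves from $(4321)$ until the orbit closes is a finite computation and produces $7$ permutations. Feeding $n=7$ into Theorem~1.1 already gives ``a union of $7$ polytopes''; what remains is to make the defining inequalities and the return time explicit.

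Third, for each of the $7$ permutations $\pi$ I would carry out the computation modeled on the worked example. From $\sigma$ one solves the linear system~(1) for the heights, obtaining each $h_i$ as an integer-coefficient linear form in $a_1,a_2,a_3$ (e.g.\ $h_1=a_1$, $h_2=a_1+a_2$, $h_3=a_2+a_3$, $h_4=a_3$ for $\pi=(4321)$). Substituting these into the validity conditions~(2)--(5) turns them into finitely many inequalities that are linear in $(a_1,a_2,a_3)$; adjoining the linear constraints $\lambda_i>0$ and $\sum_i\lambda_i\le 1$, and then using the area relation $\sum_i\lambda_i h_i=1$ to eliminate $a_1$ (which, as in the example, occurs with a strictly positive coefficient, namely a partial sum of the $\lambda_i$, so the substitution is legitimate and sign-preserving once that denominator is cleared) leaves, for each $\pi$, a region $\Omega_\pi$ cut out by inequalities in the six surviving coordinates; $\Omega$ is the union of the $\Omega_\pi$. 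I expect the fussiest point to be precisely this elimination: one must check for every $\pi$ that the denominator produced by solving for $a_1$ is sign-definite on $\Omega_\pi$, so that clearing it flips no inequality, and then confirm that the resulting constraints are genuinely of the form asserted in the corollary.

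Finally, for the return time I would follow the last paragraph of the proof of Theorem~1.1. Applying $h_s$ to a zippered rectangle replaces the holonomy vector $(x,y)$ of a saddle connection by $(x,\,y-sx)$, so the surface returns to $\Omega$ at the smallest $s>0$ of the form $x/y$ over saddle connections with $0<x\le 1$, i.e.\ at the saddle connection of least positive slope with horizontal component at most $1$. Only finitely many saddle connections can realize this first return, and each of their holonomy components is a $\mathbb{Z}$-linear combination of the $\lambda_i$ and the $h_i$, hence, after the substitution for $a_1$ above, a rational function of $(a_2,a_3;\lambda_1,\dots,\lambda_4)$. The competition among these finitely many candidates subdivides each $\Omega_\pi$ into finitely many pieces on which a single candidate wins, and on each piece the return time $x/y$ is one fixed rational function, which is exactly the ``piecewise rational'' claim. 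Beyond the $a_1$-elimination flagged above, the \emph{main obstacle} is the sheer bookkeeping: executing this analysis, and in particular pinning down the finite list of potential first-return saddle connections, for all seven permutations rather than only $(4321)$.
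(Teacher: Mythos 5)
Your proposal is correct and follows essentially the same route as the paper: the corollary is obtained by specializing Theorem 1.1 to $\alpha=(2)$, where $m=4$ yields the six coordinates $(a_2,a_3;\lambda_1,\dots,\lambda_4)$ after eliminating $a_1$ via the area relation, the Rauzy class of $(4321)$ has $7$ elements, and the return time is the least slope (note it is $y/x$, not $x/y$ as you wrote, though your subsequent phrasing ``least positive slope'' is the right one) of a saddle connection with horizontal component at most $1$, which is rational in these coordinates. The permutation-by-permutation elimination and case analysis you outline is exactly what the paper defers to and carries out in its Section 4.
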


In section \ref{PSRTB} we give an explicit description of the polytopes and bounds.

\section{Computing the Return Time Function}
\subsection{Bounding return times}
We begin by fixing notation that will be used in the proof of the bounds. Denote the cone points as $x_0, \ldots, x_4$ where $x_i$ has height $a_i$, and the rectangles by $R_1, \ldots, R_4$. In this way $x_i$ lies on the right-hand side of $R_i$, for $i \in \{1, 2, 3, 4\}$ and on the left-hand side of $R_{i+1}$ for $i \in \{0, 1, 2, 3\}$. A gluing identification $R_j$ glued to $R_{\sigma(j)+1}$ will be denoted by $R_jR_{\sigma(j)+1}$. We will denote saddle connections by $(R_{i_1}, \ldots, R_{i_n})$ which indicates a saddle connection starting at $x_{i_1-1}$ and ending at $x_{i_n}$, and passing through the rectangles $R_{i_1}, \ldots, R_{i_n}$.

To bound the return times, we assume $\sum \lambda_i + \min(\lambda_i) > 1$. We first observe the following
\begin{note}\label{RED}{\ }
\begin{enumerate}
\item $(R_1)$ will always be a saddle connection, since $h_1 \geq a_1$, and thus its slope $\frac{a_1}{\lambda_1}$ will be a universal upper bound for the return time.

\item If $a_i = h_{i+1}$, there will be no saddle connections beginning at $x_i$. In  particular, there will be no saddle connections beginning at $x_{\sigma(0)}$ due to the equality $h_0 - a_0 = h_{\sigma(0) + 1} - a_{\sigma(0)}$.

\item No saddle connections end at $x_4$.

\item If $a_i \geq a_{i+1}$, a candidate vector will never start by passing to the adjacent rectangle.

\item If $a_i < a_{i+1}$, a saddle connection beginning at $x_i$ and passing first through a gluing identification will never have smallest slope, since the vector connecting $x_i$ to $x_{i+1}$ will always exist and have smaller slope. In particular, a saddle connection beginning at $x_0$ will never start by passing through a gluing identification.

\item If $\sigma(i) = 4$, all saddle connections must pass from $R_i$ to $R_{i+1}$.
\end{enumerate}
\end{note}

\begin{lem}
If a saddle connection starts and ends at the same point, there will always be a saddle connection with the same slope that starts and ends at different points.
\end{lem}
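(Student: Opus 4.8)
The plan is to translate the statement into a claim about the combinatorics of the zippered rectangle and then produce the companion saddle connection by continuing the given one straight through the unique cone point $P$ (which has angle $6\pi$). The bookkeeping fact I would isolate first is that each name $x_i$ is attached both to the corner on the right of $R_i$ and to the corner on the left of $R_{i+1}$, and that the gluing $R_j$ to $R_{\sigma(j)+1}$ identifies the corner ``right of $R_j$'' (carrying the name $x_j$) with the corner ``left of $R_{\sigma(j)+1}$'' (carrying the name $x_{\sigma(j)}$); so each corner of the identified surface bears both a name $j$ and a name $\sigma(j)$.

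Next I would fix notation for a loop. If $\gamma=(R_{i_1},\dots,R_{i_n})$ starts and ends at the same name then $i_1-1=i_n=:j$, so $n\ge 2$ (and in fact $j\in\{1,2,3\}$), and $\gamma$ is a straight segment of some slope $s$ running from the corner ``left of $R_{j+1}$'' to the corner ``right of $R_j$''. Because $P$ has cone angle $6\pi>2\pi$, the trajectory through $\gamma$ continues straight past its terminal endpoint: at the right edge of $R_j$ one crosses the gluing into $R_{\sigma(j)+1}$ and keeps going with slope $s$. Let $\delta$ be the piece of this continuation starting at $x_j$ -- the old terminal corner, which also bears the name $x_{\sigma(j)}$ -- and running up to the next time the trajectory reaches $P$. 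When that happens, $\delta$ is a saddle connection of slope $s$ that begins at $x_{\sigma(j)}$; if its terminal name differs from $\sigma(j)$ we are finished. Symmetrically, one may extend backwards across the gluing at the left of $R_{j+1}$ to obtain a parallel saddle connection terminating at $x_{\sigma^{-1}(j)}$.

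What remains is to exclude the scenario in which such continuations keep landing on the name $x_j$. Concatenating $\gamma$ with its successive continuations yields a straight geodesic in the fixed direction of slope $s$ passing through $P$ several times in a row; since in a given direction there are only finitely many geodesic lines through $P$, and an honest forward trajectory cannot retrace a local line it has just used, after boundedly many steps the terminal name must change, giving the desired saddle connection between two distinct names. For $\mathcal{H}(2)$ this can be made fully explicit: there are only the seven permutations $\pi$ in the Rauzy class of $(4321)$, and for each of them one checks that $\sigma$ has no fixed point -- already automatic for the boundary indices $0$ and $\pi^{-1}(m)$ by irreducibility -- so that the relabelling $j\mapsto\sigma(j)$ really is a change of name; the occasional degenerate continuation that is an infinite non-singular ray is then handled by running the argument in the opposite direction, or by a direct inspection of the same finite list.

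I expect the termination step to be the main obstacle: quantifying how often a trajectory in a fixed direction can meet $P$, and verifying that the intermediate segment $\delta$ carries no cone point in its interior (so that it genuinely is a single saddle connection), is the delicate point. Once that is in hand, the relabelling through $\sigma$ and the matching of endpoints are routine.
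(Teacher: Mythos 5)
Your continuation-through-the-singularity strategy is genuinely different from the paper's argument, and the difference is exactly where the gap sits. The paper never extends $\gamma$ past its endpoint; it translates the \emph{entire closed loop} $\gamma$ vertically downward by $c=\min_i\{y_{j_i}-a_{j_i}\}$, the smallest excess height at which $\gamma$ crosses a gluing $R_{j_i}R_{\sigma(j_i)+1}$. The translated curve is a parallel closed curve of the same length and slope which now passes through the cone point at that gluing (and nowhere crosses a gluing below its corner, by minimality of $c$); cutting it at that crossing immediately yields a saddle connection of the same slope starting at $x_{\sigma(j)}$ and ending at $x_j$, and these labels differ since $\sigma$ has no fixed point. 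Because the new object is a parallel copy of a closed loop, there is nothing to prove about whether it ``comes back'' to $P$: it is closed by construction. Note also that the gluing selected is the one of minimal excess along the whole loop, not necessarily the gluing adjacent to the basepoint, which is the one your construction is forced to use.

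Your version instead follows the outgoing separatrix from $x_{\sigma(j)}$ in the direction of $\gamma$ and waits for it to meet $P$ again, and two steps of that plan are not justified. First, a separatrix in a fixed direction need not terminate at a cone point at all; you acknowledge the ``infinite non-singular ray'' possibility but dispose of it by ``running the argument in the opposite direction or direct inspection,'' and the backward continuation is subject to exactly the same failure, while no finite case inspection of the seven permutations can rule out a metric phenomenon that depends on the continuous parameters $(a,\lambda)$. (This particular separatrix does in fact terminate within length $|\gamma|$, but the only proof of that is essentially the translation argument you are trying to avoid.) Second, your termination claim for the iteration --- that successive loop-continuations must eventually change the terminal name because there are finitely many lines through $P$ and the trajectory cannot retrace one --- does not exclude the a priori scenario in which every separatrix in the orbit $x_{\sigma(j)}, x_{\sigma^2(j)},\dots$ is again a loop at its own starting name, so that the process cycles through the finite $\sigma$-orbit forever without producing the desired saddle connection. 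Both difficulties disappear if you translate the loop rather than extend it.
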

\label{diffAlt}
\begin{proof}
Let $\gamma$ be a saddle connection starting and ending at $x_i$ with positive slope. Then $\gamma$ must pass through a gluing identification $R_jR_{\sigma(j)+1}$. Let $y_j$ be the height at which it exits the left side of rectangle $R_j$ and $y_{\sigma(j) + 1}$ be the height at which it enters the right side of $R_{\sigma(j)+1}$. Then $y_j - a_j = y_{\sigma(j)+1} - a_{\sigma(j)}$, subtracting this value from the height of every point on $\gamma$ yields a saddle connection $\gamma'$ with the same slope starting at $x_{\sigma(j)}$ and ending at $x_j$. If $\gamma$ passes through multiple gluing identifications, $R_{j_i}R_{\sigma(j_i)+1}$, we pick the identification associated to $\min_i\{y_{j_i}- a_{j_i}\}$. The saddle connection remains valid as it only cycles the sequence of rectangles.
\end{proof}
\begin{example}
Consider the following zippered rectangle picture where $\pi = (3142)$ and $\gamma$ is a saddle connection connecting $x_1$ to itself. $\gamma$ passes through the gluing identification $R_3R_1$ and so there is a new saddle connection $\gamma'$ with the same slope that starts at $x_0$ and ends at $x_3$.
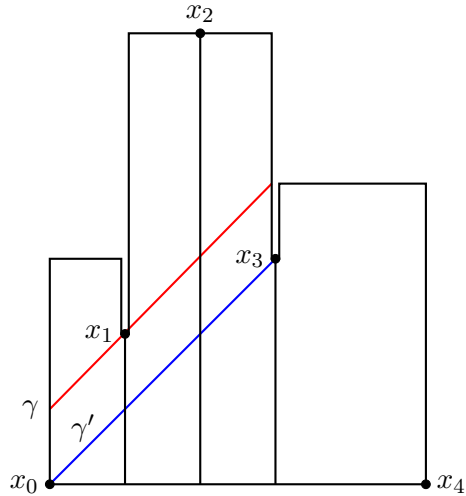
\begin{figure}[ht!]
\begin{center}
\begin{tikzpicture}
\coordinate (A0) at (0,0);
\node at (A0) [left = .1mm of A0] {$x_0$};
\coordinate (H11) at (0,3);
\coordinate (H12) at (1-1/20,3);
\coordinate (A11) at (1-1/20,2);
\coordinate (A1) at (1,2);
\node at (A1) [left = .1mm of A1] {$x_1$};
\coordinate (A12) at (1+1/20,2);
\coordinate (H2) at (1+1/20,6);
\coordinate (A2) at (2,6);
\node at (A2) [above = .1mm of A2] {$x_2$};
\coordinate (H3) at (3-1/20,6);
\coordinate (A31) at (3-1/20,3);
\coordinate (A3) at (3,3);
\node at (A3) [left = .1mm of A3] {$x_3$};
\coordinate (A32) at (3+1/20,3);
\coordinate (H41) at (3+1/20,4);
\coordinate (H42) at (5,4);
\coordinate (A4) at (5,0);
\node at (A4) [right = .1mm of A4] {$x_4$};

\coordinate (L1) at (1,0);
\coordinate (L2) at (2,0);
\coordinate (L3) at (3,0);

\coordinate (G1) at (0,1);
\node at (G1) [left = .1mm of G1] {$\gamma$};
\coordinate (G2) at (3-1/20,4);

\coordinate (GG) at (.75,.75);
\node at (GG) [left = .1mm of GG] {$\gamma'$};

\draw[thick, red] (G1) -- (G2);
\draw[thick, blue] (A0) -- (A3);

\draw[thick] (A0) -- (H11) -- (H12) -- (A11) -- (A12) -- (H2) -- (A2) -- (H3) -- (A31) -- (A32) -- (H41) -- (H42) -- (A4) -- cycle;
\draw[thick] (L1) -- (A1);
\draw[thick] (L2) -- (A2);
\draw[thick] (L3) -- (A3);

\foreach \A in {A0, A1, A2, A3, A4}
{
\draw[fill=black] (\A) circle (0.15em);
}
\end{tikzpicture}
\end{center}
\caption{The saddle connection $\gamma'$ with the same slope as $\gamma$ but starting and ending at different altitudes}
\label{disk}
\end{figure}

\end{example}

\begin{lem}\label{comp}{\ }
\begin{enumerate}
\item Consider two distinct saddle connections $\gamma$ and $\gamma'$ with the same initial sequence $(R_{i_1},\ldots, R_{i_k}, \ldots)$. If $R_{i_{k+1}} = R_{i_k +1}$ in $\gamma'$, while in $\gamma$ it does not exist or is $R_{\sigma(i_k)+1}$, then $\gamma'$ has smaller slope.
\item Consider two distinct saddle connections $\gamma$ and $\gamma'$ with the same terminal sequence $(\ldots, R_{i_k}, \ldots, R_{i_n})$. If $R_{i_{k-1}} = R_{\sigma^{-1}(i_k-1)}$ or does not exist in $\gamma'$ while in $\gamma$ it does not exist or is $R_{i_k -1}$, then $\gamma'$ has smaller slope.
\end{enumerate}
\end{lem}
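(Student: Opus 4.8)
The plan is to handle both parts with one device: develop $\gamma$ and $\gamma'$ into the plane along the part of their itineraries that they share, and then compare, in that picture, the heights at which the two developed segments meet a fixed vertical line through a copy of a cone point. Two elementary observations drive the comparison. If two segments of positive slope issue from a common point $P$ and both reach a vertical line $L$ lying to the right of $P$, the one meeting $L$ at the higher point has the larger slope. Dually, if two segments of positive slope terminate at a common point $Q$ and both cross a vertical line $L$ lying to the left of $Q$, the one meeting $L$ at the higher point has the smaller slope. Both are immediate once one writes the height of a line at $L$ as an affine function of its slope.

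For part $(1)$, since $\gamma$ and $\gamma'$ share the initial run $(R_{i_1},\dots,R_{i_k})$, unfolding this run lays down a fixed chain of translated copies of $R_{i_1},\dots,R_{i_k}$ in the plane, inside which the developed images $\tilde\gamma$ and $\tilde\gamma'$ are straight segments issuing from the single lift $\tilde x_{i_1-1}$ of the common starting cone point. In that chain the point $x_{i_k}$ occupies one definite spot $p$, namely height $a_{i_k}$ on the right edge of the developed copy of $R_{i_k}$; that edge lies on a vertical line $L$ which $\tilde\gamma$ and $\tilde\gamma'$ both reach while moving up and to the right, so $\tilde x_{i_1-1}$ lies to the left of $L$. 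Recalling that in the zippered rectangle the right edge of $R_{i_k}$ is split at $x_{i_k}$, with its lower portion glued to $R_{i_k+1}$ and its upper portion glued to $R_{\sigma(i_k)+1}$, the hypothesis forces $\tilde\gamma'$ to reach $L$ strictly below $p$, while $\tilde\gamma$ reaches $L$ either at $p$ (if $\gamma$ ends at $x_{i_k}$) or strictly above $p$. By the first observation, $\gamma'$ has strictly smaller slope than $\gamma$.

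Part $(2)$ is the mirror image: unfold the common terminal run $(R_{i_k},\dots,R_{i_n})$, so that $\tilde\gamma$ and $\tilde\gamma'$ terminate at the single lift $\tilde x_{i_n}$ of the common endpoint and enter the developed copy of $R_{i_k}$ through its left edge; that edge lies on a vertical line $L'$ to the left of $\tilde x_{i_n}$, carrying the point $x_{i_k-1}$ at one definite spot $p'$, of height $a_{i_k-1}$. Since the left edge of $R_{i_k}$ is split at $x_{i_k-1}$, with its lower portion glued to $R_{i_k-1}$ and its upper portion to $R_{\sigma^{-1}(i_k-1)}$, the hypothesis forces $\tilde\gamma'$ to reach $L'$ at $p'$ or strictly above, and $\tilde\gamma$ to reach $L'$ at $p'$ or strictly below. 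The second observation then shows the slope of $\gamma'$ is at most that of $\gamma$, with equality only if both $\tilde\gamma$ and $\tilde\gamma'$ meet $L'$ exactly at $p'$; but in that case $\gamma$ and $\gamma'$ would both be the saddle connection running from $x_{i_k-1}$ to $x_{i_n}$ along the common rectangle sequence, contradicting that they are distinct. Hence the inequality is strict.

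The hard part will be the bookkeeping behind ``unfolding the common run'': one must check that a shared combinatorial passage through the rectangles genuinely develops to a single chain of translated rectangles, so that the relevant cone point has a well-defined location $p$ (resp.\ $p'$) and the two developed segments $\tilde\gamma$, $\tilde\gamma'$ really do share their issuing (resp.\ terminal) lift. Once this is in place, all that remains is the one-line slope comparison above, together with the observation --- forced by the requirement that a saddle connection have no cone point in its interior --- that any crossing of $L$ (resp.\ $L'$) that is not an actual endpoint of $\gamma$ or $\gamma'$ is strict.
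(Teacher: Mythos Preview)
Your argument is correct and is essentially the paper's own proof: develop the shared run, compare the heights $y,y'$ at which $\tilde\gamma,\tilde\gamma'$ meet the vertical through the relevant cone point, and read off the slope inequality from the common initial (resp.\ terminal) lift; the distinctness hypothesis rules out the equality case in part~(2). The ``unfolding bookkeeping'' you flag is immediate, since a shared combinatorial run through the rectangles determines a single planar chain by construction.
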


\begin{proof}{\ }
\begin{enumerate}
\item Let $\ell = \sum_{j=1}^k \lambda_{i_j}$ be the horizontal distance traveled by both $\gamma$ and $\gamma'$ over the initial sequence. Let $y$ and $y'$ be the heights at which $\gamma$ and $\gamma'$, respectively, intersect the left boundary of $R_{i_k}$. Since $R_{i_{k+1}} = R_{i_k +1}$ in $\gamma'$, this implies that $y' < a_{i_k}$, on the other hand $y \geq a_{i_k}$, since $R_{i_{k+1}}$ in $\gamma$ either does not exist, implying $\gamma$ terminates at $x_{i_k}$, or is $R_{\sigma(i_k)+1}$, implying it must pass above $a_{i_k}$. Thus $\frac{y'}{\ell} < \frac{y}{\ell}$ and so $\gamma'$ has smaller slope.
\item In a similar manner, let $\ell = \sum_{j=k}^n \lambda_{i_j}$ be the horizontal distance traveled by both $\gamma$ and $\gamma'$ over the terminal sequence. Let $y$ and $y'$ be the heights at which $\gamma$ and $\gamma'$, respectively, intersect the right boundary of $R_{i_k}$. In $\gamma'$, $R_{i_{k-1}} = R_{\sigma^{-1}(i_k-1)}$ or it does not exist, so we have $y' \geq a_{{i_k}-1}$, in $\gamma$, $R_{i_{k-1}} = R_{i_k -1}$ or it does not exists, so we have $y \leq a_{{i_k}-1}$. We also know $y' \neq y$ since $\gamma$ and $\gamma'$ are distinct. Thus $a_i - y' < a_i - y$ and so $\gamma'$ again has smaller slope.
\end{enumerate}
\end{proof}

\begin{lem} 
If $\sum \lambda_i + \min(\lambda_i) > 1$, then the saddle connection with smallest slope will not pass through the top of a rectangle
\end{lem}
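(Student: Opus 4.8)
The plan is to argue by contradiction. Suppose the saddle connection $\gamma$ realizing the smallest slope among all saddle connections of horizontal component $\le 1$ does pass through the top of some rectangle $R_j$. In the suspension picture the top edge $I_j\times\{h_j\}$ of $R_j$ is glued, through the interval exchange, to a subinterval $T(I_j)$ of the base transversal $X$, so "passing through the top of $R_j$" means that $\gamma$ meets $X$ in its interior; following $\gamma$ from its initial cone point (along which $\gamma$ runs monotonically up and to the right), let $p$ be the first such intersection. This splits $\gamma$ into an initial arc $\gamma_1$, from a cone point to $p$, and a terminal arc $\gamma_2$, from $p$ to the terminal cone point of $\gamma$; write their holonomy vectors as $(w_1,v_1)$ and $(w_2,v_2)$, so that $w_1,w_2,v_1,v_2>0$, $v_1+v_2=v(\gamma)$, and $w_1+w_2\le 1$.

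The heart of the matter is to bound $w_1$ and $w_2$ from below. Since $\gamma_1$ starts at a cone point, which sits at the left edge of one of the subintervals of $X$, and meets no cone point again before $p$, it leaves every rectangle it visits through a full vertical side (either to an adjacent rectangle or across a zipper, in the language of Note \ref{RED}), and hence sweeps out the entire width of each rectangle it crosses before reaching the roof; the validity conditions (1)--(5) and the definition of $\sigma$ should then pin down which widths these are and force $w_1$ to be at least $\sum_i\lambda_i-\lambda_j$, possibly with an extra $\min_i\lambda_i$ coming from a forced re-entry. For $\gamma_2$, the point is that $p$ lies in the \emph{interior} of $T(I_j)$, whose endpoints need not be cone points, so $\gamma_2$ cannot terminate at a cone point until it has traversed at least one further rectangle; a short case analysis via Note \ref{RED} gives a matching lower bound on $w_2$. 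Adding the two bounds and invoking the hypothesis $\sum_i\lambda_i+\min_i\lambda_i>1$ yields $w_1+w_2>1$, contradicting $w_1+w_2\le 1$. An alternative finish that needs only the bound on $w_1$: replace the sub-arc of $\gamma$ from its initial cone point to $p$ by the horizontal segment of $X$ joining $p$ to the appropriate endpoint of $X$ — which, by the lower bound on $w_1$, has length at most $\lambda_j$ — and straighten; the resulting saddle connection has vertical component only $v_2<v(\gamma)$, horizontal component still $\le 1$, and hence strictly smaller slope than $\gamma$, contradicting minimality.

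I expect the main obstacle to be precisely this horizontal bookkeeping: proving inequalities of the form $w_1\ge\sum_i\lambda_i-\lambda_j$ and the lower bound on $w_2$ rigorously from the zippered-rectangle data — the conditions (1)--(5) and the formula for $\sigma$ — rather than from the picture, and in particular handling the cases where $\gamma_1$ or $\gamma_2$ uses a zipper identification or re-enters a rectangle it has already crossed. Once those inequalities are in place, the hypothesis $\sum_i\lambda_i+\min_i\lambda_i>1$ is exactly the amount of slack needed to push the total horizontal displacement past $1$, and the lemma follows; along the way, the universal upper bound $a_1/\lambda_1$ on the return time from Note \ref{RED}(1), together with the constraint $\sum_i\lambda_i\le 1$ built into the Poincar\'e section, can be used to dispose of degenerate configurations.
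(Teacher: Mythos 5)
The central inequality your argument rests on, $w_1\ge\sum_i\lambda_i-\lambda_j$, is false, and this is not a bookkeeping issue that more care with conditions (1)--(5) can repair. A trajectory leaving the cone point $x_i$ (which sits at height $a_i\le h_{i+1}$ on the left edge of $R_{i+1}$) with slope $s>(h_{i+1}-a_i)/\lambda_{i+1}$ exits through the top of $R_{i+1}$ after horizontal distance $(h_{i+1}-a_i)/s<\lambda_{i+1}$, having fully crossed no rectangle at all; so $w_1$ can be smaller than any single $\lambda_j$, and the first transversal crossing $p$ can occur essentially immediately. Consequently $w_1+w_2$ need not exceed $1$, and the hypothesis $\sum\lambda_i+\min\lambda_i>1$ cannot by itself force a length contradiction: saddle connections through the top of a rectangle with total horizontal component $\le 1$ genuinely exist, and what fails for them is minimality of slope, not admissibility. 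The same false bound undermines your alternative finish, whose ``straightening'' step would in any case need a separate argument that the straightened segment is a bona fide saddle connection.

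The paper's proof is therefore structured differently: length counting of the type you propose is used only in the configurations where a top-crossing really does force the path through enough rectangles --- for instance, after the Claim, crossing the top of $R_{\pi^{-1}(1)}$ forces horizontal length at least $\lambda_1+\lambda_2+\lambda_3+\lambda_4+\lambda_{\sigma(4)}>1$ --- while the short top-crossings are eliminated by exhibiting an explicit competing saddle connection of strictly smaller slope (e.g.\ $(R_{i+1})$ beats anything leaving $x_i$ through the top of $R_{i+1}$ whenever $a_i<a_{i+1}$, and $(R_1)$, $(R_1,R_2)$, $(R_3,R_4,R_1)$, etc.\ serve in other configurations). Which competitor exists depends on the ordering of $a_1,a_2,a_3$ and on $\pi$, which is why the proof is a six-case analysis over orderings, subdivided by permutation, rather than a single displacement estimate. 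To salvage your approach you would need to supply exactly this comparison mechanism in every configuration where $w_1$ or $w_2$ is small, at which point you have reproduced the paper's case analysis.
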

\begin{proof} A saddle connection cannot start at $x_0$ and go through the top of $R_i$, $1 \leq i \leq 3$, because if it follows a valid path, this would imply $(R_1, \ldots, R_i)$ exists and has smaller slope. 

\textbf{Claim.} A saddle connection that passes the transversal cannot immediately terminate at any $x_i$, therefore once it passes through the transversal it must continue through the right side of $R_4$ to $R_{\sigma(4)+1}$.

\textbf{Proof of claim.} No saddle connections end at $x_0$ or $x_4$. If the saddle connection terminates at $x_1$, $(R_1)$ always exist and has smaller slope. If the saddle connection terminates at $x_2$, either $(R_1, R_2)$ exists and has smaller slope, if $a_1 \geq a_2$, or $(R_2)$ exists and has smaller slope, if $a_1 < a_2$. If the saddle connection terminates at $x_3$, then either $(R_1, R_2, R_3)$ exists and has smaller slope, if $a_1 \geq a_2 \geq a_3$, or $(R_2, R_3)$ exists and has smaller slope, if $a_1 < a_2 \geq a_3$, or $(R_3)$ exists and has smaller slope, if $a_2 < a_3$.

The claim also implies that a saddle connection can never pass through the top of $R_{\pi^{-1}(1)}$, in this case it would have minimum length $\lambda_1 + \lambda_2 + \lambda_3 + \lambda_4 + \lambda_{\sigma(4)}$, and is thus too long.

To complete the proof of the lemma we examine six cases based on the relationships between the $a_i$.\\

\noindent\textbf{Case 1:} ($a_1 < a_2 < a_3$). The only possible permutations are $\pi = (4321), (4132), (4213)$, and in all of these cases no saddle connections can begin at $x_{\sigma(0)} = x_3$, by Note \ref{RED} (2). Thus we only need to consider $x_1$ and $x_2$. No saddle connection can start at $x_i$ and pass through the top of $R_{i+1}$, for $i = 1, 2$, since $(R_{i+1})$ exists and will have smaller slope. Since $\pi^{-1}(1) = 4$ for all the possible permutations, no saddle connection can pass though the top of $R_4$.

If the saddle connection starts at $x_1$, it cannot pass through the top of $R_3$ as $(R_3)$ exists and has smaller slope. It cannot pass through the top of $R_1$ as this would require a minimum length of $\lambda_2 + \lambda_3 + \lambda_4 + \lambda_1 + \lambda_{\sigma(4)} > 1$. If the saddle connection starts at $x_2$, it cannot pass through the top of $R_1$ or $R_2$ as this would require a minimum length of $\lambda_3+\lambda_4+\lambda_1+\lambda_2+\lambda_3 > 1$.

\noindent\textbf{Case 2:} ($a_1 \geq a_2 \geq a_3$). Here the possible permutations are $(2431)$ and $(4321)$. If we consider the surface that the zippered rectangles came from, we see that the horizontal saddle connection is in the middle separating the surface into two pieces with 3 vertices above the transversal, $x_1, x_2$, and $x_3$, and 3 vertices below. $x_i', x_2'$, and $x_3'$. Consider the 4 saddle connections from $x_0$ to $x_1$, $x_2$, and $x_3$ and from $x_1'$ and $x_2'$ to $x_4$, each of these exist since the $a_i$ are in decreasing order. Any saddle connection that passes through the top of a rectangle will either be too long or have slope greater than the slope of any of these 4 saddle connections.\\

\noindent\textbf{Case 3:} ($a_1 \geq a_3 > a_2$).  A saddle connection cannot start at $x_2$ and immediately pass through the top of $R_3$, since $(R_3)$ always exists and will have smaller slope. 

\textbf{Case 3.a:} $\pi = (4321)$. No saddle connection can pass through the top of $R_4$. A saddle connection starting at $x_2$ cannot pass through the top of $R_1$ or $R_2$, as this would require a minimum length of $\lambda_3 + \lambda_4 + \lambda_3 + \lambda_2 + \lambda_1$. Since no saddle connections can begin at $x_3$ in this permutation, it only remains to check $x_1$. A saddle connection cannot begin at $x_1$ and pass trough the top of $R_1$ or $R_2$ as it will be beaten by the saddle connection $(R_1, R_2)$. In addition, it cannot pass through the top of $R_3$ as this implies it would have had to enter $R_3$ from $R_2$ and thus $(R_3)$ will have smaller slope. 

\textbf{Case 3.b:} $\pi = (2431), (2413)$. No saddle connection can pass through the top of $R_2$. With both of these permutations no saddle connections can begin at $x_1$. Thus it remains to check $x_2$ and $x_3$. No saddle connection can start at $x_0$ and pass through the top of $R_4$ as this would require a minimum length of at least $\lambda_1 + \lambda_2 + \lambda_3 + \lambda_4 + \lambda_{\sigma(4)}$. A saddle connection cannot start at $x_2$ and pass through the top of $R_1$ nor can it start at $x_3$ and pass through the top of $R_4$, $R_1$, or $(R_3)$, as $(R_3, R_4, R_1)$ always exists and has smaller slope.\\

\noindent\textbf{Case 4:} ($a_1 < a_3 \leq a_2$). No saddle connections can start at $x_1$ and pass immediately through the top of $R_2$ as $(R_2)$ will always exist and have smaller slope.

\textbf{Case 4.a:} $\pi = (4321), (4132)$. In these cases no saddle connection can pass through the top of $R_4$ nor start at $x_3$. A saddle connection cannot start at $x_1$ and pass through the top of $R_3$ as $(R_2, R_3)$ always exists and has smaller slope, nor can it pass through the top of $R_1$, as either $(R_2)$ will have smaller slope, in the case $\pi = (4321)$, or it will be too long with minimum length $\lambda_2 + \lambda_3 + \lambda_4 + \lambda_1 + \lambda_3 + \lambda_2 > 1$, in the case $\pi = (4132)$. A saddle connection cannot start at $x_2$ and pass immediately through $(R_3)$ as $(R_3, R_2)$ always exists and will have smaller slope, nor can it pass through the top of $(R_1)$ as it will have larger slope than $(R_3,R_2)$, in the case $\pi = (4321)$ or be too long with minimum length $\lambda_3 + \lambda_2 + \lambda_3 + \lambda_4 + \lambda_1$ in the case $\pi = (4132)$.

\textbf{Case 4.b:} $\pi = (3142)$. In this case, no saddle connection can pass through the top of $R_3$ nor start at $x_2$. No saddle connection can start at $x_1$ and pass through the top of $R_4$ as $(R_4, R_2)$ will always exist and have smaller slope, nor can it pass through the top of $R_1$ as this would require a minimum length of $\lambda_2 + \lambda_3 + \lambda_1 + \lambda_4 + \lambda_2 > 1$. A saddle connection cannot start at $x_3$ and pass through the top of either $R_4$ or $R_2$ as it will have larger slope than $(R_4, R_2)$, nor can it pass through the top of $R_1$ as this would require a minimum length of $\lambda_4 + \lambda_2 + \lambda_3 + \lambda_1 + \lambda_4 + \lambda_2 > 1$.\\

\noindent\textbf{Case 5:} ($a_2 \leq a_1 < a_3$). No saddle connection can start at $x_2$ and pass through the top of $R_3$ as $(R_3)$ will always exist and have smaller slope.

\textbf{Case 5.a:} $\pi = (4321), (4213)$. In these cases no saddle connection can pass through the top of $R_4$ nor start at $x_3$. A saddle connection cannot start at $x_1$ and pass through the top of $R_2$ or $R_1$ as it will have larger slope than $(R_1, R_2)$, in addition it cannot pass through the top of $R_3$ as this would require entering from $R_2$ and thus $(R_3)$ will have smaller slope. A saddle connection cannot start at $x_2$ and pass through the top of $R_1$ nor $R_2$ as it will require a minimum length of $\lambda_3 + \lambda_4 + \lambda_2 + \lambda_1 + \lambda_3 > 1$.

\textbf{Case 5.b:} $\pi = (2431), (2413)$. In these cases no saddle connection can pass through the top of $R_2$ or start at $x_1$. If $\pi = (2431)$ then the saddle connection $(R_4, R_3, R_1)$ will always exist and it will have smaller slope than any saddle connection starting at $x_2$ and passing through the top of $R_4$ and $R_1$ as well as any saddle connection starting at $x_3$ and passing through the top of $R_4$, $R_1$, and $R_3$. The same is true in the case of $\pi = (2413)$ with the saddle connection $(R_4, R_3, R_1)$.\\

\noindent\textbf{Case 6:} ($a_3 \leq a_1 < a_2$). No saddle connection can start at $x_1$ and pass through the top of $R_2$ as $(R_2)$ will always exist and have smaller slope.

\textbf{Case 6.a:} $\pi = (4321)$. In this case no saddle connection can pass through the top of $R_4$ nor start at $x_3$. In addition we know that the saddle connection $(R_3, R_2, R_1)$ is always going to exist and will beat any saddle connection starting at $x_1$ and passing through the top of $R_3$ or $R_1$ and any saddle connection starting at $x_2$ and passing through the top of $R_3$, $R_2$, or $R_1$.

\textbf{Case 6.b:} $\pi = (3241)$. In this case no saddle connection can pass through the top of $R_3$ nor start at $x_2$. Any saddle connection starting at $x_1$ and passing through the top of $R_4$ or $R_1$ will have minimum length $\lambda_2 + \lambda_3 + \lambda_1 + \lambda_4 + \lambda_2 > 1$. Any saddle connection starting at $x_3$ and passing through the top of $R_2$ or $R_4$ will have larger slope than $(R_4, R_2)$, and if it passes through the top of $R_1$ it will have minimum length $\lambda_4 + \lambda_2 + \lambda_3 + \lambda_1 + \lambda_4 + \lambda_2 > 1$.

\end{proof}

\subsection{Labeled graph construction}
We can represent every possible saddle connection vector as a path in a labeled tree, where all vertices but the root represent the rectangle, $R_i$, the vector just passed through and the labels are pairs $(a_i - a_{i-1},\lambda_i)$. Branching occurs if we can either pass through to a consecutive rectangle or through to a glued rectangle. The graph depends on the permutation $\pi$ and consists of at most 3 disjoint subgraphs corresponding to the possible starting points.

\begin{example}
Given $\pi = (4321)$ the labeled graph starting at $x_1$ begins as follows:

\begin{center}

\begin{tikzpicture}[
        thick,
        level/.style={level distance=2.5cm, sloped},
        level 2/.style={sibling distance=2.75cm},
        level 3/.style={sibling distance=1.75cm},
        level 4/.style={sibling distance=1cm},
        level 5/.style={sibling distance=.75cm},
        grow=right]
    \node {$x_1$}
    	child {node {$R_2$}{
		child {node {$R_1$}
			child {node {$R_2$}
				child {node {$R_1$}
				edge from parent
				node [below] {\scriptsize $(a_1 - a_0,\lambda_1)$}}
				child {node {$R_3$}
				edge from parent
				node [above] {\scriptsize $(a_3 - a_2,\lambda_3)$}}
			edge from parent
			node [above] {\scriptsize $(a_2 - a_1,\lambda_2)$}}
		edge from parent
		node [above] {\scriptsize $(a_1 - a_0,\lambda_1)$}}
		child {node {$R_3$}
			child {node {$R_2$}
				child {node {$R_1$}
				edge from parent
				node [below] {\scriptsize $(a_1 - a_0,\lambda_1)$}}
				child {node {$R_3$}
				edge from parent
				node [above] {\scriptsize $(a_3 - a_2,\lambda_3)$}}
			edge from parent
			node [below] {\scriptsize $(a_2 - a_1,\lambda_2)$}}
			child {node {$R_4$}
				child {node {$R_3$}
				edge from parent
				node [above] {\scriptsize $(a_3 - a_2,\lambda_3)$}}
			edge from parent
			node [above] {\scriptsize $(a_4 - a_3,\lambda_4)$}}
		edge from parent
		node [above] {\scriptsize $(a_3 - a_2,\lambda_3)$}}
	edge from parent
	node [above] {\scriptsize $(a_2 - a_1,\lambda_2)$}}
	};
\end{tikzpicture}
\end{center}

\end{example}

\begin{note}\label{X0}{\ }
\begin{enumerate}
\item If a vector enters $R_4$ it must continue to $R_{\sigma(4)+1}$, by Note \ref{RED}(3).
\item A vector originating at $x_0$ that passes through a gluing, $R_jR_{\sigma(j)+1}$, before first passing through all rectangles consecutively will never have smallest slope, since the vector connecting $x_0$ to $x_j$ will always exist and have smaller slope. Thus the labeled graph starting at $x_0$ will always begin as follows:
\begin{center}
\begin{tikzpicture}[
        thick,
        level/.style={level distance=1.5cm,sloped},
        level 2/.style={sibling distance=2.25cm},
        level 3/.style={sibling distance=2cm},
        level 4/.style={sibling distance=1.75cm},
        grow=right]
    \node {$x_0$}
        child  {node {$R_1$}{
                  child { node {$R_2$}
            		child{node {$R_3$}
				child{node{$R_4$}
					child{node{$R_{\sigma(4)+1}$}
						edge from parent}
					edge from parent}
				edge from parent}
                    	edge from parent
            }
            edge from parent 
        }}    ;
\end{tikzpicture}
\end{center}
\end{enumerate}
\end{note}

We can denote each path by the sequence of $R_i$ it passes through. For example the path $(R_1,R_2)$ is the path originating at $x_0$ and ending at $x_2$ passing only through consecutive rectangles. In some cases a path may have a repeated structure in which case we use the shorthand $n\times(R_{i_1},\ldots,R_{i_k})$ for a block of size $k$ repeated $n$ times. For example $(R_2, R_3, R_4, R_1, R_4, R_1, R_4, R_1) = (R_2, R_3, 3\times(R_4, R_1))$. Each path has an associated vector where the first coordinate is obtained by summing over the first coordinates associated to its edges and the second coordinate is obtained similarly.

\begin{defn}\label{valid}
We call a path in the graph corresponding to $x_k$ \emph{valid} if it satisfies the following conditions
\begin{enumerate}
\item It has first coordinate $\leq 1$ and positive second coordinate
\item It does not terminate at $x_k$ or $x_4$.
\item It does not cross itself. To ensure this we need to impose the following conditions
\begin{enumerate}
\item If the saddle connection begins with $(R_2, R_3, \ldots)$, or contains $(\ldots, R_{\sigma^{-1}(1)}, R_2, R_3,\ldots)$ then it cannot end with $(\ldots, R_1, R_2)$ nor contain $(\ldots, R_1,R_2,R_{\sigma(2)+1})$, if $\sigma(2) + 1$ exists.
\item If the saddle connection begins with $(R_2, R_3, R_4, \ldots)$ or contains $(\ldots, R_{\sigma^{-1}(1)},R_2, R_3, R_4,\ldots)$ then it cannot end with $(\ldots, R_1, R_2, R_3)$ nor contain $(\ldots, R_1, R_2, R_3, R_{\sigma(3)+1},\ldots)$, if $\sigma(3)+1$ exists.
\item If the saddle connection begins with $(R_3, R_4, \ldots)$ or contains $(\ldots, R_{\sigma^{-1}(2)}, R_3, R_4, \ldots)$ it cannot end with $(\ldots, R_2, R_3)$.
\item If $a_3 > a_1$ the saddle connection cannot wrap around $R_2, R_3$, that is $(n \times(R_2, R_3))$ does not exist for any $n > 1$. Similarly, no saddle connection can wrap around $R_1, R_2$, that is $(n \times (R_1, R_2))$ does not exist for any $n > 1$.
\end{enumerate}
\end{enumerate}
\end{defn}

A valid path will correspond to a saddle connection if it satisfies the existence conditions given in Section \ref{PSRTB}.

\subsection{Algorithm to bound the smallest slope}
\begin{enumerate}
\item Create labeled graphs for all $x_i$ such that $a_i < h_{i+1}$. Recall that we will have $\leq 3$ of these graphs due to Note \ref{RED}(2).
\item Find all paths in the tree whose second coordinate is $\leq 1$
\item Eliminate any paths that start and end at the same $x_i$, i.e. ending at $R_i$, and any paths that end at $x_4$, i.e. ending at $R_4$, or whose first coordinate is $\leq 0$.
\item Find the slopes.
\item Determine if the smallest slope belongs to a legitimate saddle connection, that is, check the existence conditions given in Section \ref{PSRTB}.
\item If the smallest slope does not correspond to a legitimate saddle connection, remove it from the list.
\item Repeat steps 5 \& 6 until a valid saddle connection is obtained.
\end{enumerate}

\subsubsection{Bounding the number of paths}

\begin{note}
The length of any valid path in the tree is bounded by $C = \frac{1}{\min_i(\lambda_i)}$.
\end{note}

\begin{prop}
The maximum number of paths for a given collection of 3 graphs will be bounded above by the quantity 
\[4 + \sum_{i=2}^{C-3}F_i + 2\left(\sum_{i=2}^{C+1}F_i\right)\]
where $F_i$ is the $i$th Fibonacci number.
\end{prop}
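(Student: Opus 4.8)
The plan is to bound the number of valid paths in the three labeled graphs by controlling, for each fixed path length $\ell$, how many paths of that length can occur, and then summing over the admissible range of $\ell$. The key structural observation is that a labeled graph (for a fixed starting point $x_k$) is a binary tree: at each vertex the path either proceeds to the consecutive rectangle $R_{i+1}$ or to the glued rectangle $R_{\sigma(i)+1}$, so there are at most two children at each node. Consequently the number of paths of length exactly $\ell$ in a single graph is at most the number of leaves at depth $\ell$ in a binary tree, which is at most $2^{\ell-1}$. However, this crude bound is far too weak; the Fibonacci numbers in the statement signal that the branching is in fact constrained, and I would make this precise using Note~\ref{X0}(1): once a vector enters $R_4$ it must continue to $R_{\sigma(4)+1}$, i.e. that step is forced, not a branch. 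More generally, I expect that in each graph the ``genuine'' branch points cannot occur at two consecutive steps — after a branch the next step is forced by the geometry (e.g. entering $R_4$, or the no-self-crossing conditions of Definition~\ref{valid}(3)) — so that the count of length-$\ell$ paths satisfies a Fibonacci-type recursion $P_\ell \le P_{\ell-1} + P_{\ell-2}$, giving $P_\ell \le F_\ell$.

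Next I would handle the three graphs separately according to their starting points, since the bound in the statement is not symmetric (it has one term without the factor $2$ and two terms with it). For the graph starting at $x_0$, Note~\ref{X0}(2) forces the path to begin $(R_1, R_2, R_3, R_4, R_{\sigma(4)+1})$, i.e. the first several steps are completely rigid and only afterwards can branching begin; moreover a path from $x_0$ through the full consecutive sequence is already long, so the remaining admissible length is short. This is the source of the ``$4 + \sum_{i=2}^{C-3} F_i$'' term: the $4$ counts the few short paths that terminate before any branching (the consecutive paths $(R_1), (R_1,R_2), (R_1,R_2,R_3)$ and one more), and $\sum_{i=2}^{C-3} F_i$ bounds the branching paths, where $C-3$ reflects that four steps of length at least $\min_i(\lambda_i)$ were already spent on the forced initial segment (using the length bound $C = 1/\min_i(\lambda_i)$ from the preceding Note). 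For the graphs starting at $x_1$ and $x_2$ — at most two of them by Note~\ref{RED}(2), since no saddle connection begins at $x_{\sigma(0)}$ — branching can begin essentially immediately, so each contributes at most $\sum_{i=2}^{C+1} F_i$ valid paths (the range running to $C+1$ rather than $C$ because the length-$\ell$ count is by path length measured in rectangles, and a path of $\ell$ rectangles has horizontal length $\ge (\ell)\min_i(\lambda_i)$ only loosely, so a small slack is needed). Adding the three contributions yields exactly $4 + \sum_{i=2}^{C-3} F_i + 2\left(\sum_{i=2}^{C+1} F_i\right)$.

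The main obstacle will be establishing the Fibonacci recursion rigorously, i.e. showing that the branching in each tree is ``at most every other step.'' This requires a careful case analysis: whenever a path branches at a rectangle $R_i$, one must verify using Notes~\ref{RED} and~\ref{X0} together with the validity conditions of Definition~\ref{valid} that the step out of each of the two resulting vertices is forced (no further branch), or at least that the two-step branching factor is at most $3$ rather than $4$, which already suffices for a Fibonacci bound $P_\ell \le P_{\ell-1} + P_{\ell-2}$. A secondary, more bookkeeping-type difficulty is getting the exact constants ($4$, the shift by $-3$, the shift by $+1$) rather than a bound of the same shape with worse constants; these come from tracking precisely how many forced initial steps each graph has and how the length-in-rectangles translates into the horizontal-length bound $\sum \lambda_{i_j} \le 1$. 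I would organize the proof as: (i) binary-tree structure of the graphs; (ii) the no-consecutive-branching lemma, hence $P_\ell \le F_\ell$ per graph; (iii) the forced initial segments for $x_0$ versus $x_1, x_2$, fixing the summation ranges; (iv) add up, invoking Note~\ref{RED}(2) to cap the number of graphs at three with at most one of the special type.
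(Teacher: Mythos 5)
Your overall architecture is the same as the paper's: bound the number of vertices at each level of each tree by a Fibonacci number, observe that the $x_0$-tree has a forced initial segment of length $5$ (Note \ref{X0}(2)) so that its Fibonacci sum is truncated at $C-3$ and preceded by the $4$ unbranched prefixes of that segment, take first-branching level $j=1$ for the other two trees to get $\sum_{i=2}^{C+1}F_i$ each, and add. The gap is in your proposed source of the Fibonacci recursion. Your primary claim --- that the step out of each child of a branch point is forced, i.e.\ that branching cannot occur at two consecutive levels of a tree --- is false for these graphs. For $\pi=(4321)$ the single-child rectangles are $R_4$ and $R_1$ while $R_2$ and $R_3$ both branch, and $R_3$ is a child of $R_2$: in the displayed tree for $x_1$ the vertex $R_2$ branches to $\{R_1,R_3\}$ and $R_3$ immediately branches again to $\{R_2,R_4\}$. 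This lemma cannot be repaired, because if it held the extremal level counts would satisfy $V_{i+2}=2V_i$ (growth rate $\sqrt2$), whereas the actual level counts of the $(4321)$ tree are $1,2,3,5,8,\dots$, growing at the golden ratio. Your fallback --- a two-step branching factor of at most $3$ --- is true but insufficient: $V_{i+2}\le 3V_i$ permits growth $\sqrt3$, which exceeds the golden ratio, and combined with $V_{i+1}\le 2V_i$ it still does not force $V_{i+2}\le V_{i+1}+V_i$.

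The mechanism the paper actually uses is that exactly two of the four rectangle labels are non-branching: $R_4$ itself (there is no $R_5$ to pass through to, Note \ref{X0}(1)) and the rectangle whose glued successor would be $R_5$ (Note \ref{RED}(6)). Hence every level beyond the first branch contains a non-branching vertex, and in the extremal configuration the tree is self-similar, containing two copies of itself offset by two levels and one offset by three, so the level counts satisfy $V_i=2V_{i-2}+V_{i-3}$, which is equivalent to the Fibonacci recursion $V_i=V_{i-1}+V_{i-2}$. To make your step (ii) work you would need to replace the no-consecutive-branching lemma by this label-based analysis, checking for each of the seven permutations that each branching vertex has at most one branching child (as happens for $(4321)$, where the worst case is realized); the rest of your outline --- steps (i), (iii), (iv) and the bookkeeping of the constants $4$, $C-3$, $C+1$ --- then goes through exactly as in the paper.
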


\begin{proof}
Note that 4 and $\pi^{-1}(4)$ only have one option to pass to. Suppose that a given graph first branches at level $j \geq 1$, then in all subsequent levels there must be at least one 4 or $\pi^{-1}(4)$ appearing. Consider the case where we have the minimum number of single branches appearing in each level, which would give the maximum number of paths in our graph. The graph is self-similar in structure and contains 3 copies of itself, 2 offset by 2 positions and 1 offset by 3 positions. Let $F_i$ be the number of vertices at level $i$ in the graph, we then get the recursive relation
\[F_i = 2F_{i-2} + F_{i-3}\]
which is equivalent to the recursive relation $F_i = F_{i-1} + F_{i-2}$, for the Fibonacci numbers.

The number of paths in each graph, $N_k$ associated to $a_k$ for $k \in \{0, 1, 2, 3\}$, is bounded by 
\[(j-1) + \sum_{i=2}^{C-(j-2)}F_i.\]
For all $k \neq 0$, we can use $j=1$ to obtain the following bound:
\[N_k \leq \sum_{i=2}^{C+1}F_i.\]
By Note \ref{X0}, when $k = 0$ will have $j \geq 5$, so for all 3 graphs together we obtain a bound of
\[4 + \sum_{i=2}^{C-3}F_i + 2\left(\sum_{i=2}^{C+1}F_i\right)\]
\end{proof}

\begin{example}
Let $\pi = (4321)$ and $C = 5$. The graph corresponding to $x_1$ is shown below
\begin{center}

\begin{tikzpicture}[
        thick,
        level/.style={level distance=1.5cm, sloped},
        level 2/.style={sibling distance=2.75cm},
        level 3/.style={sibling distance=1.75cm},
        level 4/.style={sibling distance=1cm},
        level 5/.style={sibling distance=.75cm},
        grow=right]
    \node {$x_1$}
    	child {node {$R_2$}{
		child {node {$R_1$}
			child {node {$R_2$}
				child {node {$R_1$}
					child {node {$R_2$}
					edge from parent
					}
				edge from parent
				}
				child {node {$R_3$}
					child {node {$R_2$}
					edge from parent
					}
					child {node {$R_4$}
					edge from parent
					}
				edge from parent
				}
			edge from parent
			}
		edge from parent
		}
		child {node {$R_3$}
			child {node {$R_2$}
				child {node {$R_1$}
					child {node {$R_2$}
					edge from parent
					}
				edge from parent
				}
				child {node {$R_3$}
					child {node {$R_2$}
					edge from parent
					}
					child {node {$R_4$}
					edge from parent
					}
				edge from parent
				}
			edge from parent
			}
			child {node {$R_4$}
				child {node {$R_3$}
					child {node {$R_2$}
					edge from parent
					}
					child {node {$R_4$}
					edge from parent
					}
				edge from parent
				}
			edge from parent
			}
		edge from parent
		}
	edge from parent
	}
	};
\end{tikzpicture}
\end{center}

In this example, we branch at $j=1$ and $ N_1 \leq \sum_{i=2}^{6}F_i$.
\end{example}

Since the set of valid paths is finite and non-empty, there will be a path whose corresponding saddle connection has smallest slope, and the algorithm will terminate.

\subsubsection{Examples}
For these illustrative examples, area and lengths have not been normalized.
\begin{example} Consider the zippered rectangles defined by the coordinates $(1, 3; 2, 3, 1, 2)$ and permutation $\pi = (4321)$. For the purposes of this example the total area will be 28 and we will be looking for all paths of length $\leq 8$.
\begin{enumerate}
\item Determine the vectors $a$ and $h$. From example 1, we know $\sigma$ and the equalities, therefore we have
\[a_1 = \frac{28- 1(3) - (3+1)(1) - 3(2)}{2+3} = \frac{15}{5} = 3\]
and so,
\[h = (3, 4, 4, 3) \text{ and } a = (3, 1, 3, 0)\]
\item Since $a_3 = h_3$, there will be no saddle connections emanating from $x_3$, so we need only create the weighted graphs beginning at $x_0, x_1$, and $x_2$. In addition, we have that $a_2 < a_3$ so we do not need to consider any saddle connections that begin with a gluing from $x_2$, and $a_1 \geq a_2$, so we only need to consider saddle connections that begin with a gluing from $x_1$.
\begin{itemize}
\item[$x_0$:]\
\begin{center}

\begin{tikzpicture}[
        thick,
        level/.style={level distance=1.5cm,sloped},
        level 2/.style={sibling distance=2.25cm},
        level 3/.style={sibling distance=2cm},
        level 4/.style={sibling distance=1.75cm},
        grow=right]
    \node {$x_0$}
        child  {node {$R_1$}{
                  child { node {$R_2$}
            	child{node {$R_3$}
			  edge from parent
                    node [above] {$(2,1)$}}
                    edge from parent
			node [above] {$(-2,3)$}
            }
            edge from parent 
            node [above] {$(3,2)$}
        }}    ;
\end{tikzpicture}
\end{center}

\item[$x_1$:]\
\begin{center}

\begin{tikzpicture}[
        thick,
        level/.style={level distance=1.5cm,sloped},
        level 2/.style={sibling distance=2.25cm},
        level 3/.style={sibling distance=2cm},
        level 4/.style={sibling distance=1.5cm},
        grow=right]
    \node {$x_1$}
    	child {node {$R_2$}
		child{node {$R_1$}
			child{node {$R_2$}
				edge from parent
				node[above] {$(-2,3)$}}
			edge from parent
			node [above] {$(3,2)$}}
		edge from parent
		node [above] {$(-2,3)$}}
   ;
\end{tikzpicture}
\end{center}

\item[$x_2$:]\
\begin{center}

\begin{tikzpicture}[
        thick,
        level/.style={level distance=1.5cm,sloped},
        level 2/.style={sibling distance=2.25cm},
        level 3/.style={sibling distance=2cm},
        level 4/.style={sibling distance=1.75cm},
        grow=right]
    \node {$x_2$}
    	child{node {$R_3$}
		child {node {$R_4$}
			child {node {$R_3$}
				child {node {$R_2$}
					child {node {$R_3$}
						edge from parent
						node [above] {$(2,1)$}}
					edge from parent
					node [above] {$(-2,3)$}}
				child {node {$R_4$}
					child {node {$R_3$}
						edge from parent
						node [above] {$(2,1)$}}
					edge from parent
					node [above] {$(-3,2)$}}
				edge from parent
				node [above] {$(2,1)$}}
			edge from parent
			node [above] {$(-3,2)$}}
		edge from parent
		node [above] {$(2,1)$}}
		   ;
\end{tikzpicture}
\end{center}

\end{itemize}

\item Now we need to find all paths in the tree with positive first coordinate and second coordinate $\leq 8$ that do not terminate at $x_4$ or at the starting altitude.
\begin{itemize}
\item[$x_0$:] $(R_1) = \frac{3}{2}, (R_1, R_2) = \frac{1}{5}, (R_1, R_2, R_3) = \frac{3}{6}$
\item[$x_1$:] None
\item[$x_2$:] $(R_2) = \frac{2}{1}, (R_3,R_4,R_3) = \frac{1}{4}, (R_3, R_4,R_3, R_2, R_3) = \frac{1}{8}, (R_3, R_2, R_3) = \frac{2}{5}$
\end{itemize}

\item The smallest slope is $\frac{1}{8}$ so we need to check whether or not $(R_3, R_4,R_3, R_2, R_3)$ is a legitimate saddle connection. We note that the saddle connection is not valid as it violates the condition given in Def. \ref{valid}.3.c. Next smallest is $\frac{1}{5}$ belonging to the vector $(R_1, R_2)$, since $a_1 \geq a_2$, this vector is a saddle connection connecting $x_0$ to $x_2$.
\end{enumerate}

\begin{center}
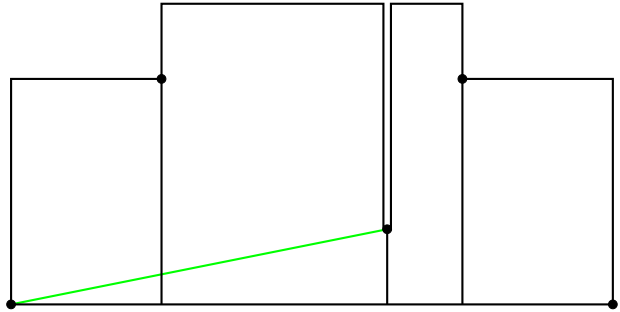
\begin{figure}[h!]\begin{tikzpicture}
\coordinate (A0) at (0,0);
\coordinate (H1) at (0,3);
\coordinate (A1) at (2,3);
\coordinate (H21) at (2,4);
\coordinate (H22) at (5-1/20,4);
\coordinate (A21) at (5-1/20,1);
\coordinate (A2) at (5,1);
\coordinate (A22) at (5+1/20,1);
\coordinate (H31) at (5+1/20,4);
\coordinate (H32) at (6,4);
\coordinate (A3) at (6,3);
\coordinate (H4) at (8,3);
\coordinate (A4) at (8,0);

\coordinate (L1) at (2,0);
\coordinate (L2) at (5,0);
\coordinate (L3) at (6,0);

\draw[thick, green] (A0) -- (A2);

\draw[thick] (A0) -- (H1) -- (A1) -- (H21) -- (H22) -- (A21) -- (A22)-- (H31) -- (H32) -- (A3) -- (H4) -- (A4) -- cycle;
\draw[thick] (L1) -- (A1);
\draw[thick] (L2) -- (A2);
\draw[thick] (L3) -- (A3);

\foreach \A in {A0, A1, A2, A3, A4}
{
\draw[fill=black] (\A) circle (0.15em);
}
\end{tikzpicture}
\caption{The saddle connection of smallest slope}
\label{disk}
\end{figure}
\end{center}

\end{example}

\begin{example}
Consider the zippered rectangles defined by the coordinates $(6, 3; 1, 1, 1, 2)$ and permutation $\pi = (3142)$. For the purposes of this example, total area is 23 and we will be looking for all possible saddle connections of length $\leq 5$.
\begin{enumerate}
\item In order to determine the vectors $a$ and $h$, we must first find the permutation $\sigma$,
\[\sigma(0) = 2, \sigma(1) = 3, \sigma(2) = 4, \sigma(3) = 0, \sigma(4) = 1.\]
The equalities $h_i - a_i = h_{\sigma(1)+1} - a_{\sigma(i)}$ give the following
\begin{alignat*}{ 3}
0 & = h_3 - 6 \hspace{10pt}& \Rightarrow h_3 = 6\\
h_1 - a_ 1 &= h_4 - 3 &\\
h_2 - 6 & = 0 \hspace{10pt} & \Rightarrow h_2 = 6\\
6 - 3 & = h_1 \hspace{10pt} & \Rightarrow h_1 = 3\\
h_4 & = 6 - a_1 & 
\end{alignat*}
Using the total area to solve for $a_1$ yields
\[a_1 = -\frac{23 - 3(1) - 6(1) - 6(1)- 6(2)}{2} = 2\]
Thus, $a = (2, 6, 3, 0)$ and $h = (3, 6, 6, 4)$.
\item Since $a_2 = h_3$, there will be no saddle connections starting from $x_2$. Also, we have that $a_1 < a_2$, so we only need to consider vectors starting from $x_1$ and passing to the next consecutive rectangle. 
\begin{itemize}
\item[$x_0$:]\
\begin{center}

\begin{tikzpicture}[
        thick,
        level/.style={level distance=1.5cm,sloped},
        level 2/.style={sibling distance=2.25cm},
        level 3/.style={sibling distance=2cm},
        level 4/.style={sibling distance=1.75cm},
        grow=right]
    \node {$x_0$}
        child  {node {$R_1$}{
                  child { node {$R_2$}
            	child{node {$R_3$}
			  edge from parent
                    node [above] {$(-3,1)$}}
                    edge from parent
			node [above] {$(4,1)$}
            }
            edge from parent
            node [above] {$(2,1)$}
        }}    ;
\end{tikzpicture}
\end{center}

\item[$x_1$:]\
\begin{center}

\begin{tikzpicture}[
        thick,
        level/.style={level distance=1.5cm,sloped},
        level 2/.style={sibling distance=2.25cm},
        level 3/.style={sibling distance=2cm},
        level 4/.style={sibling distance=1.75cm},
        grow=right]
    \node {$x_1$}
        child  {node {$R_2$}
        		child {node {$R_3$}
			child{node {$R_1$}
				child{node {$R_2$}
					child{node {$R_3$}
						edge from parent
						node[above]{$(-3,1)$}}
					edge from parent
					node[above]{$(4,1)$}}
				edge from parent
				node[above]{$(2,1)$}}
			child{node {$R_4$}
				child{node{$R_2$}
					edge from parent
					node[above]{$(4, 1)$}}
				edge from parent
				node[above]{$(-3,2)$}}
			edge from parent
			node[above]{$(-3,1)$}}
        		edge from parent
		node[above] {$(4,1)$}}
        ;
\end{tikzpicture}
\end{center}

\item[$x_3$:]\
\begin{center}

\begin{tikzpicture}[
        thick,
        level/.style={level distance=1.5cm,sloped},
        level 2/.style={sibling distance=2.25cm},
        level 3/.style={sibling distance=2cm},
        level 4/.style={sibling distance=1.75cm},
        grow=right]
    \node {$x_3$}
        child  {node {$R_4$}
        		child {node {$R_2$}
			child {node {$R_3$}
				child {node {$R_1$}
					edge from parent
					node[above]{$(2,1)$}}
				edge from parent
				node[above]{$(-3,1)$}}
			edge from parent
			node[above]{$(4,1)$}}
        		edge from parent
		node[above] {$(-3,2)$}
        } ;
\end{tikzpicture}
\end{center}

\end{itemize}
\item Now we need to find all paths in the tree with positive first coordinate and second coordinate $\leq 5$ that do not terminate at $x_4$ or at the starting altitude.
\begin{itemize}
\item[$x_0$:] $(R_1) = \frac{2}{1}, (R_1, R_2) = \frac{6}{2}, (R_1, R_2, R_3) = \frac{3}{3}$
\item[$x_1$:] $(R_2) = \frac{4}{1}, (R_2,R_3) = \frac{1}{2}, (R_2, R_3, R_4,R_2) = \frac{2}{5}, (R_2, R_3, R_1,R_2) = \frac{7}{4},  (R_2, R_3, R_1,R_2, R_3) = \frac{4}{5}$
\item[$x_3$:] $(R_4, R_2) = \frac{1}{3}$
\end{itemize}

\item The smallest slope is $\frac{1}{3}$, it is a legitimate saddle connection connecting $x_3$ and $x_2$.
\end{enumerate}

\begin{center}
\begin{figure}[h!]

\begin{tikzpicture}
\coordinate (A0) at (0,0);
\coordinate (H11) at (0,3);
\coordinate (H12) at (1-1/20,3);
\coordinate (A11) at (1-1/20,2);
\coordinate (A1) at (1,2);
\coordinate (A12) at (1+1/20,2);
\coordinate (H2) at (1+1/20,6);
\coordinate (A2) at (2,6);
\coordinate (H3) at (3-1/20,6);
\coordinate (A31) at (3-1/20,3);
\coordinate (A3) at (3,3);
\coordinate (A32) at (3+1/20,3);
\coordinate (H41) at (3+1/20,4);
\coordinate (H42) at (5,4);
\coordinate (A4) at (5,0);

\coordinate (L1) at (1,0);
\coordinate (L2) at (2,0);
\coordinate (L3) at (3,0);

\coordinate (G1) at (5,3+2/3);
\coordinate (G2) at (1+1/20,5+2/3);

\draw[thick, green] (G2) -- (A2);
\draw[thick, green] (A3) -- (G1);

\draw[thick] (A0) -- (H11) -- (H12) -- (A11) -- (A12) -- (H2) -- (A2) -- (H3) -- (A31) -- (A32) -- (H41) -- (H42) -- (A4) -- cycle;
\draw[thick] (L1) -- (A1);
\draw[thick] (L2) -- (A2);
\draw[thick] (L3) -- (A3);

\foreach \A in {A0, A1, A2, A3, A4}
{
\draw[fill=black] (\A) circle (0.15em);
}
\end{tikzpicture}
\caption{The saddle connection of smallest slope}
\label{disk}
\end{figure}
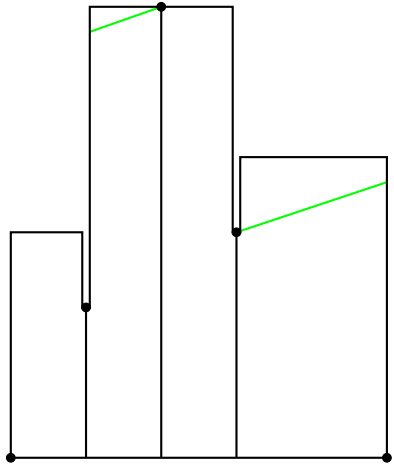
\end{center}

\end{example}

\section{Explicit Description of Poincar\'e Sections and Return Time Functions}\label{PSRTB}

In  this section we give the linear inequalities defining the Poincar\'e section for each of the permutations $\pi$ and bound the return time function on these pieces. To bound the return time we assume $\sum \lambda_i + \min(\lambda_i) > 1$ and let $1-\sum \lambda_i = \epsilon$. We give a complete description for the first two permutations and the rest follow a similar proof.

\subsection{$\pi = (3142)$}
\subsubsection{The Poincar\'e Section}
The equalities that describe the space:
\begin{itemize}
\item $\sum\lambda_i h_i = 1$
\item $h_2 = a_2$
\item $h_3 = a_2$
\item $h_1=h_3-a_3$
\item $h_4 = h_2 - a_1$
\end{itemize}
The inequalities that describe the section:
\begin{itemize}
\item $\sum \lambda_i \leq 1$
\item $\lambda_i > 0$
\item $0 < a_1 < h_1$
\item $a_2 > a_1 + a_3$
\item $0 < a_3 < h_4$
\end{itemize}
\subsubsection{The labeled graph} Since $h_3 = a_2$, no saddle connections can start at $x_2$. Since $\pi(2) = 4$, by Note \ref{RED}(6), a valid saddle connection can only pass from $R_2$ to $R_3$. Thus there will be no branching at any $R_2$ vertex.

\begin{itemize}
\item[$x_0$:]\
\begin{center}

\begin{tikzpicture}[
        thick,
        level/.style={level distance=2.5cm,sloped},
        level 2/.style={sibling distance=2.25cm},
        level 3/.style={sibling distance=2cm},
        level 4/.style={sibling distance=1.75cm},
        grow=right]
    \node {$x_0$}
        child  {node {$R_1$}{
                  child { node {$R_2$}
            	child{node {$R_3$}
			  edge from parent
                    node [above] {$(a_3-a_2,\lambda_3)$}}
                    edge from parent
			node [above] {$(a_2-a_1,\lambda_2)$}
            }
            edge from parent 
            node [above] {$(a_1,\lambda_1)$}
        }}    ;
\end{tikzpicture}
\end{center}

\item[$x_1$:]\
\begin{center}

\begin{tikzpicture}[
        thick,
        level/.style={level distance=2cm, sloped,font=\scriptsize},
        level 2/.style={sibling distance=2.25cm},
        level 3/.style={sibling distance=2.75cm, level distance=1.5cm},
        level 4/.style={sibling distance=1.75cm},
        level 6/.style={level distance=1.75cm,sibling distance=1.4cm},
        level 9/.style={level distance=1cm},
        grow=right]
    \node {$x_1$}
    	child {node {$R_2$}
		child{node {$R_3$}
			child{ node{$R_1$}
				child{node {$R_4$}
					child{node {$R_2$}
						edge from parent
						node[above, font=\scriptsize]{$(a_2-a_1,\lambda_2)$}}
					edge from parent
					node[above]{$(-a_3,\lambda_4)$}}
				child{node {$R_2$}
					child{node{$R_3$}
						child{node{$R_1$}
							child{node{$R_2$}
								child{node{$R_3$}
									child{node{$\cdots$}
										edge from parent}
									edge from parent
									node[above]{$(a_3-a_2,\lambda_3)$}}
								edge from parent
								node[above]{$(a_2-a_1,\lambda_2)$}}
							edge from parent
							node[above]{$(a_1,\lambda_1)$}}
						child{node{$R_4$}
							edge from parent
							node[above]{$(-a_3,\lambda_4)$}}
						edge from parent
						node[above]{$(a_3-a_2,\lambda_3)$}}
					edge from parent
					node[above]{$(a_2-a_1,\lambda_2)$}}
				edge from parent
				node[above]{$(a_1,\lambda_1)$}}
			child{ node{$R_4$}
				child{node{$R_2$}
					child{ node{$R_3$}
						child{node{$R_1$}
							edge from parent
							node[above]{$(a_1,\lambda_1)$}}
						child{node{$R_4$}
							child{node{$R_2$}
								child{node{$R_3$}
									child{node{$\cdots$}
										edge from parent}
									edge from parent
									node[above]{$(a_3-a_2,\lambda_3)$}}
								edge from parent
								node[above]{$(a_2-a_1,\lambda_2)$}}
							edge from parent
							node[above]{$(-a_3,\lambda_4)$}}
						edge from parent
						node[above]{$(a_3-a_2, \lambda_3)$}}
					edge from parent
					node[above]{$(a_2-a_1,\lambda_2)$}}
				edge from parent
				node[above] {$(-a_3, \lambda_4)$}}
			edge from parent
			node [above] {$(a_3-a_2,\lambda_3)$}}
		edge from parent
		node [above] {$(a_2-a_1,\lambda_2)$}}
   ;
\end{tikzpicture}
\end{center}
Note that we can eliminate any branches that result in a second coordinate that has length $> 1$ to obtain the reduced tree diagram

\begin{center}
\begin{tikzpicture}[
        thick,
        level/.style={level distance=2cm, sloped,font=\scriptsize},
        level 2/.style={sibling distance=2.25cm},
        level 3/.style={sibling distance=1.75cm, level distance=1.5cm},
        level 4/.style={sibling distance=2cm},
        level 6/.style={level distance=1.75cm,sibling distance=1.75cm},
        level 9/.style={level distance=1cm},
        grow=right]
    \node {$x_1$}
    	child {node {$R_2$}
		child{node {$R_3$}
			child{ node{$R_1$}
				child{node {$R_2$}
					child{node{$R_3$}
						child{node{$R_1$}
							child{node{$R_2$}
								child{node{$R_3$}
									child{node{$\cdots$}
										edge from parent}
									edge from parent
									node[above]{$(a_3-a_2,\lambda_3)$}}
								edge from parent
								node[above]{$(a_2-a_1,\lambda_2)$}}
							edge from parent
							node[above]{$(a_1,\lambda_1)$}}
						edge from parent
						node[above]{$(a_3-a_2,\lambda_3)$}}
					edge from parent
					node[above]{$(a_2-a_1,\lambda_2)$}}
				edge from parent
				node[above]{$(a_1,\lambda_1)$}}
			child{ node{$R_4$}
				child{node{$R_2$}
					child{ node{$R_3$}
						child{node{$R_4$}
							child{node{$R_2$}
								child{node{$R_3$}
									child{node{$\cdots$}
										edge from parent}
									edge from parent
									node[above]{$(a_3-a_2,\lambda_3)$}}
								edge from parent
								node[above]{$(a_2-a_1,\lambda_2)$}}
							edge from parent
							node[above]{$(-a_3,\lambda_4)$}}
						edge from parent
						node[above]{$(a_3-a_2, \lambda_3)$}}
					edge from parent
					node[above]{$(a_2-a_1,\lambda_2)$}}
				edge from parent
				node[above] {$(-a_3, \lambda_4)$}}
			edge from parent
			node [above] {$(a_3-a_2,\lambda_3)$}}
		edge from parent
		node [above] {$(a_2-a_1,\lambda_2)$}}
   ;
\end{tikzpicture}
\end{center}

Further, note that if $(R_2, R_3)$ exists it will have smaller slope than any longer path following the gluing branch, by Lemma \ref{comp}.1. Thus we can reduce the tree once again

\begin{center}
\begin{tikzpicture}[
        thick,
        level/.style={level distance=2cm, sloped,font=\scriptsize},
        level 2/.style={sibling distance=2.25cm},
        level 3/.style={sibling distance=1.75cm, level distance=1.5cm},
        level 4/.style={sibling distance=2cm},
        level 6/.style={level distance=1.75cm,sibling distance=1.75cm},
        level 9/.style={level distance=1cm},
        grow=right]
    \node {$x_1$}
    	child {node {$R_2$}
		child{node {$R_3$}
			child{ node{$R_4$}
				child{node{$R_2$}
					child{ node{$R_3$}
						child{node{$R_4$}
							child{node{$R_2$}
								child{node{$R_3$}
									child{node{$\cdots$}
										edge from parent}
									edge from parent
									node[above]{$(a_3-a_2,\lambda_3)$}}
								edge from parent
								node[above]{$(a_2-a_1,\lambda_2)$}}
							edge from parent
							node[above]{$(-a_3,\lambda_4)$}}
						edge from parent
						node[above]{$(a_3-a_2, \lambda_3)$}}
					edge from parent
					node[above]{$(a_2-a_1,\lambda_2)$}}
				edge from parent
				node[above] {$(-a_3, \lambda_4)$}}
			edge from parent
			node [above] {$(a_3-a_2,\lambda_3)$}}
		edge from parent
		node [above] {$(a_2-a_1,\lambda_2)$}}
   ;
\end{tikzpicture}
\end{center}

\item[$x_3$:]\
\begin{center}

\begin{tikzpicture}[
        thick,
        level/.style={level distance=2cm, sloped,font=\scriptsize},
        level 1/.style={level distance=1.75cm},
        level 2/.style={sibling distance=2.25cm},
        level 3/.style={sibling distance=2cm},
        level 4/.style={level distance=1.75cm},
        level 7/.style={level distance=1.75cm},
        level 9/.style={level distance=1cm},
        grow=right]
    \node {$x_3$}
    	child{node {$R_4$}
		child {node {$R_2$}
			child{node{$R_3$}
				child{node{$R_1$}
					child{node{$R_4$}
						edge from parent
						node[above]{$(-a_3,\lambda_4)$}}
					child{node{$R_2$}
						edge from parent
						node[above]{$(a_2-a_1,\lambda_2)$}}
					edge from parent
					node[above]{$(a_1, \lambda_1)$}}
				child{node{$R_4$}
					child{node{$R_2$}
						child{node{$R_3$}
							child{node{$R_4$}
								child{node{$R_2$}
									child{node{$\cdots$}
										edge from parent}
									edge from parent
									node[above]{$(a_2-a_1,\lambda_2)$}}
								edge from parent
								node[above]{$(-a_3,\lambda_4)$}}
							edge from parent
							node[above]{$(a_3-a_2,\lambda_3)$}}
						edge from parent
						node[above]{$(a_2-a_1,\lambda_2)$}}
					edge from parent
					node[above]{$(-a_3,\lambda_4)$}}
				edge from parent
				node[above]{$(a_3-a_2,\lambda_3)$}}
			edge from parent
			node[above]{$(a_2-a_1,\lambda_2)$}}
		edge from parent
		node [above] {$(-a_3,\lambda_4)$}}
		   ;
\end{tikzpicture}
\end{center}

Again we can eliminate any paths that result in a second coordinate of length $> 1$ and any paths that end at $R_1$. This results in the reduced tree diagram
\begin{center}

\begin{tikzpicture}[
        thick,
        level/.style={level distance=2cm, sloped,font=\scriptsize},
        level 1/.style={level distance=1.75cm},
        level 2/.style={sibling distance=2.25cm},
        level 3/.style={sibling distance=2cm},
        level 4/.style={level distance=1.75cm},
        level 7/.style={level distance=1.75cm},
        level 9/.style={level distance=1cm},
        grow=right]
    \node {$x_3$}
    	child{node {$R_4$}
		child {node {$R_2$}
			child{node{$R_3$}
				child{node{$R_4$}
					child{node{$R_2$}
						child{node{$R_3$}
							child{node{$R_4$}
								child{node{$R_2$}
									child{node{$\cdots$}
										edge from parent}
									edge from parent
									node[above]{$(a_2-a_1,\lambda_2)$}}
								edge from parent
								node[above]{$(-a_3,\lambda_4)$}}
							edge from parent
							node[above]{$(a_3-a_2,\lambda_3)$}}
						edge from parent
						node[above]{$(a_2-a_1,\lambda_2)$}}
					edge from parent
					node[above]{$(-a_3,\lambda_4)$}}
				edge from parent
				node[above]{$(a_3-a_2,\lambda_3)$}}
			edge from parent
			node[above]{$(a_2-a_1,\lambda_2)$}}
		edge from parent
		node [above] {$(-a_3,\lambda_4)$}}
		   ;
\end{tikzpicture}
\end{center}

\end{itemize}

Now we consider all possible valid paths in this diagram. 
\[(R_1), (R_1, R_2), (R_1, R_2, R_3), (R_2), (R_2, R_3),(R_4, R_2), \]
\[(R_2, R_3, R_4, R_2), \ldots, (R_2, n\times(R_3, R_4, R_2)),\]
\[(R_2, R_3, R_4, R_2, R_3), \ldots, (R_2, R_3, n\times(R_4, R_2, R_3)),\]
\[ (R_4, R_2, R_3, R_4, R_2), \ldots, (R_4, R_2, n\times(R_3, R_4, R_2))\]

Where $n$ is the maximum number of times the path can repeat before it becomes too long. In each of the last 3 cases the longest valid path will have the smallest slope, let $k$ be the number of times the block is repeated in that path.

Note that $(R_4, R_2)$ always exists, since its first coordinate $a_2 - a_1 - a_3$ will always be positive. This implies that neither $(R_2)$, $(R_1, R_2)$, $(R_2, n\times(R_3, R_4, R_2))$, nor $(R_4, R_2, n\times(R_3, R_4, R_2))$ will have smallest slope, by Lemma \ref{comp}.2. The remaining paths are shown in the table below with their corresponding slopes and existence conditions.

  \begin{center}
\bgroup
\def\arraystretch{2.5}
\begin{tabular}{c | c | c }
Saddle Connection & Slope & Existence Condition(s)\\
\hline
$(R_1)$ & $\dfrac{a_1}{\lambda_1}$ & None\\
$(R_1, R_2, R_3)$ & $\dfrac{a_3}{\lambda_1+\lambda_2+\lambda_3}$ & $a_1 > \dfrac{\lambda_1(a_2)}{\lambda_1+\lambda_2}$, $a_2 > \dfrac{(\lambda_1+\lambda_2)a_3}{\lambda_1+\lambda_2+\lambda_3}$ \\
$(R_2, R_3)$ & $\dfrac{a_3 -a_1}{\lambda_2 + \lambda_3}$ & $a_3 > a_1$\\
$(R_2, R_3, k\times(R_4, R_2, R_3))$ & $\dfrac{a_3 - (k+1)a_1}{(k+1)(\lambda_2+\lambda_3)+k\lambda_4}$ &  \parbox{7cm}{\vspace{.75\baselineskip} \centering$a_3 > (k+1)a_1$,\\ $a_1 + \dfrac{(\lambda_2+\lambda_3)(a_3 - (k+1)a_1)}{(k+1)(\lambda_2 + \lambda_3) + k\lambda_4} < a_3$,\\ $(k+1)(\lambda_2+\lambda_3)+k\lambda_4 \leq 1$} \\
$(R_4, R_2)$ & $\dfrac{a_2 - (a_1+a_3)}{\lambda_2+\lambda_4}$ & None\\
\end{tabular}
\egroup
\end{center}

First assume $a_1 \geq a_3$. In this case, since $a_2 > a_1$ and $a_2 > a_3$, $(R_1, R_2, R_3)$ will always exist and thus, $(R_1)$ will never have smallest slope, by Lemma \ref{comp}.1. $(R_1, R_2, R_3)$ has smallest slope if
\[\frac{a_3}{\lambda_1+\lambda_2+\lambda_3} \leq \frac{a_2-(a_1+a_3)}{\lambda_4+\lambda_2}\]
otherwise $(R_4, R_2)$ has smallest slope.

Now assume $a_1 < a_3$. In this case, $(R_2, R_3)$ always exists. If $(R_2, R_3, k\times(R_4, R_2, R_3))$ exists, it will beat $(R_2, R_3)$. So assume it does not exist. 
If we also have that
\[\frac{a_1}{\lambda_1} \leq \frac{a_3-a_1}{\lambda_2+\lambda_3}\]
then $(R_1)$ has smallest slope. If not, then $(R_2, R_3)$ has smallest slope if
\[\frac{a_3-a_1}{\lambda_2+\lambda_3} \leq \frac{a_2-(a_1+a_3)}{\lambda_4+\lambda_2}\]
otherwise $(R_4,R_2)$ has smallest slope.

\subsection{$\pi = (3241)$}
\subsubsection{The Poincar\'e Section}
The equalities that describe the space:
\begin{itemize}
\item $\sum\lambda_i h_i = 1$
\item $h_1 = a_1$
\item $h_3 = a_2$
\item $h_4=h_1$
\item $h_3-a_3 = h_2 - a_1$
\end{itemize}
The inequalities that describe the section:
\begin{itemize}
\item $\sum \lambda_i \leq 1$
\item $\lambda_i > 0$
\item $0 < a_3 <  a_2 < a_1 \leq h_2$
\end{itemize}
\subsubsection{Bounding the return times} Since $h_3=a_2$, no saddle connections can start at $x_2$, and since $\pi(1) = 4$, a valid saddle connection can only pass from $R_1$ to $R_2$, thus there will be no branching at any $R_1$ vertex.
\begin{itemize}
\item[$x_0$:]\
\begin{center}

\begin{tikzpicture}[
        thick,
        level/.style={level distance=2.5cm,sloped},
        level 2/.style={sibling distance=2.25cm},
        level 3/.style={sibling distance=2cm},
        level 4/.style={sibling distance=1.75cm},
        grow=right]
    \node {$x_0$}
        child  {node {$R_1$}{
                  child { node {$R_2$}
            	child{node {$R_3$}
			  edge from parent
                    node [above] {$(a_3-a_2,\lambda_3)$}}
                    edge from parent
			node [above] {$(a_2-a_1,\lambda_2)$}
            }
            edge from parent 
            node [above] {$(a_1,\lambda_1)$}
        }}    ;
\end{tikzpicture}
\end{center}

\item[$x_1$:]\
\begin{center}

\begin{tikzpicture}[
        thick,
        level/.style={level distance=2cm,sloped,font=\scriptsize},
        level 2/.style={sibling distance=2.25cm},
        level 3/.style={sibling distance=2.75cm},
        level 4/.style={sibling distance=1.5cm},
        level 5/.style={sibling distance =2.25cm},
        level 8/.style={level distance=1cm},
        grow=right]
    \node {$x_1$}
    	child {node {$R_2$}
		child{node {$R_4$}
			child{ node{$R_1$}
				child{node{$R_2$}
					child{node{$R_4$}
						child{node{$R_1$}	
							child{node{$R_2$}
								child{node{$\cdots$}
									edge from parent}
								edge from parent
								node[above]{$(a_2-a_1,\lambda_2)$}}
							edge from parent
							node[above]{$(a_1, \lambda_1)$}}
						edge from parent
						node[above]{$(-a_3, \lambda_4)$}}
					child{node{$R_3$}
						edge from parent
						node[above]{$(a_3-a_2, \lambda_3)$}}
					edge from parent
					node[above]{$(a_2-a_1,\lambda_2)$}}
				edge from parent
				node[above] {$(a_1, \lambda_1)$}}
			edge from parent
			node[above] {$(-a_3,\lambda_4)$}}
		edge from parent
		node [above] {$(a_2-a_1,\lambda_2)$}}
   ;
\end{tikzpicture}
\end{center}

Eliminating branches that result in second coordinates greater than $1$ yields the following reduced tree diagram

\begin{center}

\begin{tikzpicture}[
        thick,
        level/.style={level distance=2cm,sloped,font=\scriptsize},
        level 2/.style={sibling distance=2.25cm},
        level 3/.style={sibling distance=2.75cm},
        level 4/.style={sibling distance=1.5cm},
        level 5/.style={sibling distance =2.25cm},
        level 8/.style={level distance=1cm},
        grow=right]
    \node {$x_1$}
    	child {node {$R_2$}
		child{node {$R_4$}
			child{ node{$R_1$}
				child{node{$R_2$}
					child{node{$R_4$}
						child{node{$R_1$}	
							child{node{$R_2$}
								child{node{$\cdots$}
									edge from parent}
								edge from parent
								node[above]{$(a_2-a_1,\lambda_2)$}}
							edge from parent
							node[above]{$(a_1, \lambda_1)$}}
						edge from parent
						node[above]{$(-a_3, \lambda_4)$}}
					edge from parent
					node[above]{$(a_2-a_1,\lambda_2)$}}
				edge from parent
				node[above] {$(a_1, \lambda_1)$}}
			edge from parent
			node[above] {$(-a_3,\lambda_4)$}}
		edge from parent
		node [above] {$(a_2-a_1,\lambda_2)$}}
   ;
\end{tikzpicture}
\end{center}

\item[$x_3$:]\
\begin{center}

\begin{tikzpicture}[
        thick,
        level/.style={level distance=2cm,sloped,font=\scriptsize},
        level 2/.style={sibling distance=2.25cm},
        level 3/.style={sibling distance=2cm},
        level 4/.style={sibling distance=2cm},
        level 5/.style={sibling distance=2.25cm},
        level 7/.style={level distance=1cm},
        grow=right]
    \node {$x_3$}
    	child{node {$R_4$}
		child {node {$R_1$}
			child {node {$R_2$}
				child{node{$R_4$}
					child{node{$R_1$}
						child{node{$R_2$}
							child{node{$\cdots$}
								edge from parent}
							edge from parent
							node[above]{$(a_2-a_1,\lambda_2)$}}
						edge from parent
						node[above]{$(a_1,\lambda_1)$}}
					edge from parent
					node[above]{$(-a_3,\lambda_4)$}}
				child{node{$R_3$}
					child{node{$R_2$}
						edge from parent
						node[above]{$(a_2-a_1,\lambda_2)$}}
					child{node{$R_4$}
						child{node{$R_1$}
							edge from parent
							node[above]{$(a_1,\lambda_1)$}}
						edge from parent
						node[above]{$(-a_3,\lambda_4)$}}
					edge from parent
					node[above]{$(a_3-a_2,\lambda_3)$}}
				edge from parent
				node[above]{$(a_2-a_1, \lambda_2)$}}
			edge from parent
			node[above]{$(a_1,\lambda_1)$}}
		edge from parent
		node [above] {$(-a_3,\lambda_4)$}}
		   ;
\end{tikzpicture}
\end{center}

Which results in the reduced tree diagram

\begin{center}

\begin{tikzpicture}[
        thick,
        level/.style={level distance=2cm,sloped,font=\scriptsize},
        level 2/.style={sibling distance=2.25cm},
        level 3/.style={sibling distance=2cm},
        level 4/.style={sibling distance=2cm},
        level 5/.style={sibling distance=2.25cm},
        level 7/.style={level distance=1cm},
        grow=right]
    \node {$x_3$}
    	child{node {$R_4$}
		child {node {$R_1$}
			child {node {$R_2$}
				child{node{$R_4$}
					child{node{$R_1$}
						child{node{$R_2$}
							child{node{$\cdots$}
								edge from parent}
							edge from parent
							node[above]{$(a_2-a_1,\lambda_2)$}}
						edge from parent
						node[above]{$(a_1,\lambda_1)$}}
					edge from parent
					node[above]{$(-a_3,\lambda_4)$}}
				edge from parent
				node[above]{$(a_2-a_1, \lambda_2)$}}
			edge from parent
			node[above]{$(a_1,\lambda_1)$}}
		edge from parent
		node [above] {$(-a_3,\lambda_4)$}}
		   ;
\end{tikzpicture}
\end{center}

Here we see that the path $(R_4, R_1, R_2)$ will always be a valid path and will have smaller slope than any longer path in this tree, therefore we can reduce it further to 
\begin{center}

\begin{tikzpicture}[
        thick,
        level/.style={level distance=2.5cm,sloped},
        level 2/.style={sibling distance=2.25cm},
        level 3/.style={sibling distance=2cm},
        level 4/.style={sibling distance=2cm},
        level 5/.style={sibling distance=2.25cm},
        level 7/.style={level distance=1cm},
        grow=right]
    \node {$x_3$}
    	child{node {$R_4$}
		child {node {$R_1$}
			child {node {$R_2$}
				edge from parent
				node[above]{$(a_2-a_1, \lambda_2)$}}
			edge from parent
			node[above]{$(a_1,\lambda_1)$}}
		edge from parent
		node [above] {$(-a_3,\lambda_4)$}}
		   ;
\end{tikzpicture}
\end{center}

\end{itemize}

Now we consider all possible valid paths in the reduced diagram
\[(R_1), (R_1,R_2), (R_1, R_2, R_3), (R_4, R_1),(R_4, R_1, R_2),\]
\[(R_2, R_4, R_1, R_2), \ldots, (R_2, n \times(R_4, R_1, R_2))\]
In the last sequence the shortest valid path will have the smallest slope, let $(R_2, k \times(R_4, R_1, R_2))$ be that path, where $1 \leq k \leq n$.

Since $(R_1, R_2, R_3)$ always exits, neither $(R_1)$ nor $(R_1, R_2)$ will have have smallest slope, by Lemma \ref{comp}.1. Similarly, $(R_4, R_1)$ will never have smallest slope since $(R_4, R_1, R_2)$ always exists and will have smaller slope.

The remaining paths are shown in the table below with their corresponding slopes and existence conditions

  \begin{center}
\bgroup
\def\arraystretch{2.25}
\begin{tabular}{c | c | c }
Saddle Connection & Slope & Existence Condition(s)\\
\hline
$(R_1, R_2, R_3)$ & $\dfrac{a_3}{\lambda_1+\lambda_2+\lambda_3}$ & None \\
$(R_2, k \times (R_4, R_1, R_2))$ & $\dfrac{(k+1)a_2 - ka_3 - a_1}{k\lambda_1 + (k+1)\lambda_2+k\lambda_4}$ & \parbox{7cm}{\vspace{.75\baselineskip}\centering $(k+1)a_2 > a_1+ka_3$, \\ $k\lambda_1 + (k+1)\lambda_2+k\lambda_4 \leq 1$}\\
$(R_4, R_1, R_2)$ & $\dfrac{a_2-a_3}{\lambda_1 + \lambda_2 + \lambda_3}$ & None\\
\end{tabular}
\egroup
\end{center}

If $(R_2, k \times (R_4, R_1, R_2))$ exists it will have smallest slope if
\[\frac{(k+1)a_2 - ka_3 - a_1}{k\lambda_1 + (k+1)\lambda_2+k\lambda_4} < \frac{a_3}{\lambda_1+\lambda_2+\lambda_3}\]
otherwise $(R_1, R_2, R_3)$ will have smallest slope. If it does not exist, then $(R_1, R_2, R_3)$ will have smallest slope if
\[\frac{a_3}{\lambda_1+\lambda_2+\lambda_3} \leq \frac{a_2-a_3}{\lambda_1 + \lambda_2 + \lambda_3}\]
otherwise $(R_4, R_1, R_2)$ will have smallest slope.

\subsection{$\pi = (4132)$}

\subsubsection{The Poincar\'e Section}
The equalities that describe the space:
\begin{itemize}
\item $\sum\lambda_i h_i = 1$
\item $h_4 = a_3$
\item $h_2 = a_2$
\item $h_4 = h_1$
\item $h_1 - a_1 = h_3 - a_2$
\end{itemize}
The inequalities that describe the section:
\begin{itemize}
\item $\sum \lambda_i \leq 1$
\item $\lambda_i > 0$
\item $0 < a_1 < a_2, a_3$
\item $a_2 \leq h_3$
\item $a_3 \leq h_3$
\end{itemize}

\subsubsection{Bounding the return times}
Since $h_4 = a_3$, no saddle connections can start from $x_3$. In addition, all $a_i$ must be distinct, otherwise our transversal will not be a saddle connection as it will contain a singular point in its interior. The reduced tree diagram giving the candidate saddle connections is shown below:
 
 \begin{itemize}
 \bgroup
\item[$x_0$:]\
\begin{center}

\begin{tikzpicture}[
        thick,
        level/.style={level distance=2.5cm,sloped},
        level 2/.style={sibling distance=2.25cm},
        level 3/.style={sibling distance=2cm},
        level 4/.style={sibling distance=1.75cm},
        grow=right]
    \node {$x_0$}
        child  {node {$R_1$}{
                  child { node {$R_2$}
            	child{node {$R_3$}
			  edge from parent
                    node [above] {$(a_3-a_2,\lambda_3)$}}
                    edge from parent
			node [above] {$(a_2-a_1,\lambda_2)$}
            }
            edge from parent 
            node [above] {$(a_1,\lambda_1)$}
        }}    ;
\end{tikzpicture}
\end{center}
\egroup

\bgroup
\item[$x_1$:]\
\begin{center}

\begin{tikzpicture}[
        thick,
        level/.style={level distance=2.5cm,sloped},
        level 2/.style={sibling distance=2.25cm},
        level 4/.style={level distance=1cm},
        grow=right]
    \node {$x_1$}
    	child {node {$R_2$}
		child{node {$R_3$}
			edge from parent
			node [above] {$(a_3-a_2,\lambda_3)$}}
		edge from parent
		node [above] {$(a_2-a_1,\lambda_2)$}}
   ;
\end{tikzpicture}
\end{center}
\egroup

\bgroup
\item[$x_2$:]\
\begin{center}

\begin{tikzpicture}[
        thick,
        level/.style={level distance=2.5cm,sloped},
        level 2/.style={sibling distance=2.75cm},
        level 3/.style={sibling distance=2cm},
        level 5/.style={level distance=1cm},
        grow=right]
    \node {$x_2$}
    	child{node {$R_3$}
		child{node{$R_2$}
			child{node{$R_3$}
				child{node{$R_2$}
					child{node{$\cdots$}}
					edge from parent
					node[above]{$(a_2-a_1, \lambda_2)$}}
				edge from parent
				node[above]{$(a_3-a_2, \lambda_3)$}}
			edge from parent
			node[above]{$(a_2-a_1, \lambda_2)$}}
		child{node{$R_4$}
			child{node{$R_1$}
				child{node{$R_3$}
					child{node{$\cdots$}}
					edge from parent
					node[above]{$(a_3-a_2,\lambda_3)$}}
				edge from parent
				node[above]{$(a_1,\lambda_1)$}}
			edge from parent
			node[above]{$(-a_3,\lambda_4)$}}
		edge from parent
		node [above] {$(a_3-a_2,\lambda_3)$}}
		   ;
\end{tikzpicture}
\end{center}
\egroup
\end{itemize}

The possible valid paths are as follows
\[(R_1), (R_1, R_2), (R_1, R_2, R_3), (R_2),(R_3),(R_2, R_3),\]
\[(R_3, R_4,R_1, R_3),\ldots, (R_3, n\times(R_4, R_1, R_3)),\]
 \[(R_3, R_2, R_3), \ldots, (R_3, n \times (R_2, R_3))\]

We need only consider $(R_3, k \times (R_4, R_1, R_2))$ corresponding to the longest valid path in its corresponding sequences, and $(R_3, k \times (R_2, R_3)$ corresponding to the shortest valid path in its sequence. In addition, since $(R_2)$ always exists, $(R_1, R_2)$ will never have smallest slope, by Lemma \ref{comp}.2.

  \begin{center}
\bgroup
\def\arraystretch{2.25}
\begin{tabular}{c | c | c }
Saddle Connection & Slope & Existence Condition(s)\\
\hline
$(R_1)$ & $\dfrac{a_1}{\lambda_1}$ & None\\
$(R_1, R_2, R_3)$ & $\dfrac{a_3}{\lambda_1+\lambda_2+\lambda_3}$ & $a_1 > \dfrac{\lambda_1(a_2)}{\lambda_1+\lambda_2}$, $a_2 > \dfrac{(\lambda_1+\lambda_2)a_3}{\lambda_1+\lambda_2+\lambda_3}$ \\
$(R_2)$ & $\dfrac{a_2 - a_1}{\lambda_1}$ & None\\
$(R_2, R_3)$ & $\dfrac{a_3 -a_1}{\lambda_2 + \lambda_3}$ & $a_2 > \dfrac{\lambda_2(a_3 - a_1)}{\lambda_2+\lambda_1} + a_1$\\
$(R_3)$ & $\dfrac{a_3 - a_2 }{\lambda_3}$ & $a_3 > a_2$\\
$(R_3, k \times(R_4, R_1, R_3))$ & $\dfrac{ka_1 +a_3- (k+1)a_2}{k\lambda_1 + (k+1)\lambda_3 + k\lambda_4}$ & \parbox{5cm}{\centering $a_3 > a_2 +k(a_2-a_1)$\\ $k\lambda_1 + (k+1)\lambda_3 + k\lambda_4 \leq 1$}\\
$(R_3, k \times( R_2, R_3))$ & $\dfrac{(k+1)a_3 - a_2 - ka_1}{k\lambda_2 + (k+1)\lambda_3}$ & \parbox{5cm}{\centering$(k+1)a_3 - a_2 - ka_1 > 0$\\$k\lambda_2 + (k+1)\lambda_3 \leq 1$}\\
\end{tabular}
\egroup
\end{center}

First assume $a_3 \leq a_2$. In this case $(R_2, R_3)$ always exists and beats $(R_2)$ and $(R_1, R_2, R_3)$. Thus the candidates for smallest slope are $(R_1)$, $(R_2, R_3)$, and $(R_3, k \times(R_2, R_3))$.

Next assume $a_3 > a_2$. In this case $(R_3)$ always exist and beats $(R_1, R_2, R_3)$, $(R_2, R_3)$, and $(R_3, k \times( R_2, R_3))$. Thus the candidates for smallest slope are $(R_1)$, $(R_2)$, $(R_3)$, and $(R_3, k \times(R_4, R_1, R_3))$.

\subsection{$\pi = (2413)$}
\subsubsection{The Poincar\'e Section}
The equalities that describe the space:
\begin{itemize}
\item $\sum\lambda_i h_i = 1$
\item $h_2 = a_1$
\item $h_3 = a_3$
\item $h_4 = h_1$
\item $h_1 - a_1 = h_3 - a_2$
\end{itemize}
The inequalities that describe the section:
\begin{itemize}
\item $\sum \lambda_i \leq 1$
\item $\lambda_i > 0$
\item $a_1 \leq h_1$
\item $0 < a_2 < a_1$
\item $a_2 \leq a_3$
\item $a_3 \leq h_1$
\end{itemize}
\subsubsection{Bounding the return times}
Since $h_2 = a_1$, no saddle connections can start at $x_1$. For the transversal to be a saddle connection $a_1 \neq a_2$. 
 
  \begin{itemize}
\item[$x_0$:]\
\begin{center}

\begin{tikzpicture}[
        thick,
        level/.style={level distance=2.5cm,sloped},
        level 2/.style={sibling distance=2.25cm},
        level 3/.style={sibling distance=2cm},
        level 4/.style={sibling distance=1.75cm},
        grow=right]
    \node {$x_0$}
        child  {node {$R_1$}{
                  child { node {$R_2$}
            	child{node {$R_3$}
			  edge from parent
                    node [above] {$(a_3-a_2,\lambda_3)$}}
                    edge from parent
			node [above] {$(a_2-a_1,\lambda_2)$}
            }
            edge from parent 
            node [above] {$(a_1,\lambda_1)$}
        }}    ;
\end{tikzpicture}
\end{center}

\item[$x_2$:]\
\begin{center}

\begin{tikzpicture}[
        thick,
        level/.style={level distance=2.5cm,sloped},
        level 2/.style={sibling distance=2.25cm},
        level 3/.style={sibling distance=2.75cm},
        level 4/.style={sibling distance=1.5cm},
        level 5/.style={level distance=1cm},
        grow=right]
    \node {$x_2$}
    	child {node {$R_3$}
		child{node {$R_4$}
			child{ node{$R_1$}
				child{node{$R_3$}
					child{node{$\cdots$}}
					edge from parent
					node[above]{$(a_3-a_2, \lambda_3)$}}
				edge from parent
				node[above] {$(a_1,\lambda_1)$}}
			edge from parent
			node [above] {$(-a_3,\lambda_4)$}}
		edge from parent
		node [above] {$(a_3-a_2,\lambda_3)$}}
   ;
\end{tikzpicture}
\end{center}

\item[$x_3$:]\
\begin{center}

\begin{tikzpicture}[
        thick,
        level/.style={level distance=2.5cm,sloped},
        level 2/.style={sibling distance=2.75cm},
        level 3/.style={sibling distance=2.5cm},
        level 4/.style={sibling distance=1.75cm},
        level 6/.style={level distance=1cm},
        grow=right]
    \node {$x_3$}
    	child{node {$R_4$}
		child{node{$R_1$}
			child{node{$R_2$}
				child{node{$R_4$}
					child{node{$R_1$}
						child{node{$\cdots$}}
						edge from parent
						node[above]{$(a_1, \lambda_1)$}}
					edge from parent
					node[above]{$(-a_3, \lambda_3)$}}
				edge from parent
				node[above]{$(a_2-a_1, \lambda_2)$}}
			child{node{$R_3$}
				child{node{$R_4$}
					child{node{$R_1$}
						child{node{$\cdots$}}
						edge from parent
						node[above]{$(a_1, \lambda_1)$}}
					edge from parent
					node[above]{$(-a_3, \lambda_4)$}}
				edge from parent
				node[above]{$(a_3-a_2, \lambda_3)$}}
			edge from parent
			node[above]{$(a_1,\lambda_1)$}}
		edge from parent
		node [above] {$(-a_3,\lambda_4)$}}
		   ;
\end{tikzpicture}
\end{center}

\end{itemize}

The possible valid paths are as follows
\[(R_1), (R_1, R_2), (R_1, R_2, R_3), (R_3), (R_4, R_1),\]
\[(R_3, R_4, R_1), \ldots, (n\times(R_3, R_4, R_1)),\]
\[(R_3, R_4, R_1, R_3), \ldots, (R_3, n \times(R_4, R_1, R_3)),\]
\[(R_4, R_1, R_2, R_4, R_1), \ldots, (R_4, R_1, n\times(R_2, R_4, R_1)),\]
\[(R_4, R_1, R_3, R_4, R_1), \ldots, (R_4, R_1, n\times (R_3, R_4, R_1))\]

 Note that if $(R_3, R_4, R_1)$ does not exist, than neither do any of the other paths in the sequence, and if it does exist it will have the smallest slope. We also only need to consider $(R_4, R_1, k \times(R_3, R_4, R_1))$ corresponding to the shortest valid path in its sequence and $(R_3, k \times(R_4, R_1, R_3)$ and $(R_4, R_1, k\times (R_2, R_4, R_1))$ corresponding to the longest valid path in their respective sequences. In addition, since $(R_1, R_2)$ always exists, $(R_1)$ will never have smallest slope, and since $(R_3)$ always exists $(R_1, R_2, R_3)$ will never have smallest slope.

 \begin{center}
\bgroup
\def\arraystretch{3.5}
\begin{tabular}{c | c | c}
Saddle Connection & Slope & Existence Condition(s)\\
\hline
$(R_1, R_2)$ & $\dfrac{a_2}{\lambda_1+\lambda_2}$ & None\\
$(R_3)$ & $\dfrac{a_3 - a_2 }{\lambda_3}$ & None\\
$(R_3, R_4, R_1)$ & $\dfrac{a_1 - a_2}{\lambda_1 + \lambda_2}$ & $a_3 > \dfrac{\lambda_3(a_1-a_2)}{\lambda_3+\lambda_4+\lambda_1} + a_1$\\
$(R_3, k \times(R_4, R_1, R_3))$ & $\dfrac{a_3+ka_1-(k+1)a_2}{(k+1)\lambda_3 + k(\lambda_1+\lambda_4)}$ &\parbox{7cm}{\centering{ $a_3 > a_2 + k(a_2 - a_1)$\\$(k+1)\lambda_3 + k(\lambda_1+\lambda_4) \leq 1$}}\\
$(R_4, R_1)$ & $\dfrac{a_1- a_3}{\lambda_1 + \lambda_4}$ & $a_1 > a_3$\\
$(R_4, R_1, k\times (R_2, R_4, R_1))$ & $\dfrac{a_1+ka_2-(k+1)a_3}{(k+1)(\lambda_4 +\lambda_1) + k\lambda_2}$ & \parbox{7cm}{\centering{ $a_1 > a_3 + k(a_3 - a_2)$\\$a_3 + \dfrac{(\lambda_4+\lambda_4)(a_1+ka_2-(k+1)a_3)}{(k+1)(\lambda_4 +\lambda_1) + k\lambda_2} < a_1$\\$(k+1)(\lambda_4 +\lambda_1) + k\lambda_2\leq 1$}}\\
$(R_4, R_1, k \times(R_3, R_4, R_1))$ & $\dfrac{(k+1)a_1 - ka_2 - a_3}{(k+1)(\lambda_4+\lambda_1)+k\lambda_3}$ & \parbox{7cm}{\centering{ $a_3 < a_1 + k(a_1-a_2)$\\$a_3 + \dfrac{(\lambda_4+\lambda_1)((k+1)a_1 - ka_2 - a_3)}{(k+1)(\lambda_4+\lambda_1)+k\lambda_3} < a_1$\\$(k+1)(\lambda_4+\lambda_1)+k\lambda_3\leq 1$}}\\
\end{tabular}
\egroup
\end{center}

Assume $a_1 > a_3$, $(R_4, R_1)$ always exists and it beats $(R_3, R_4, R_1)$ and $(R_4, R_1, k\times(R_3,R_4,R_1))$. Thus the candidates for smallest slope are $(R_1, R_2)$, $(R_3)$, $(R_3, k \times(R_4, R_1, R_3))$, $(R_4, R_1)$, and $(R_4, R_1, k \times(R_2, R_4, R_1))$.

Next assume $a_1 \leq a_3$. The candidates for smallest slope are $(R_1, R_2)$, $(R_3)$, $(R_3, R_4, R_1)$, $(R_3, k \times(R_4, R_1, R_3))$, and $(R_4, R_1, k \times(R_3, R_4, R_1))$.

\subsection{$\pi = (2431)$}
\subsubsection{The Poincar\'e Section}
The equalities that describe the space:
\begin{itemize}
\item $\sum\lambda_i h_i = 1$
\item $h_2 = a_1$
\item $h_1 = a_1$
\item $h_1 = h_3 - a_3$
\item $h_4 = h_3 - a_2$
\end{itemize}
The inequalities that describe the section:
\begin{itemize}
\item $\sum \lambda_i \leq 1$
\item $\lambda_i > 0$
\item $0 < a_2 < a_1$
\item $0 < a_3 \leq h_4$
\end{itemize}
\subsubsection{Bounding the return times} Since $a_1=h_2$, no saddle connections can start from $x_1$.

 \begin{itemize}
  
\bgroup 
\item[$x_0$:]\
\begin{center}

\begin{tikzpicture}[
        thick,
        level/.style={level distance=2.5cm,sloped},
        level 2/.style={sibling distance=2.25cm},
        level 3/.style={sibling distance=2cm},
        level 4/.style={sibling distance=1.75cm},
        grow=right]
    \node {$x_0$}
        child  {node {$R_1$}{
                  child { node {$R_2$}
            	child{node {$R_3$}
			  edge from parent
                    node [above] {$(a_3-a_2,\lambda_3)$}}
                    edge from parent
			node [above] {$(a_2-a_1,\lambda_2)$}
            }
            edge from parent 
            node [above] {$(a_1,\lambda_1)$}
        }}    ;
\end{tikzpicture}
\end{center}
\egroup

\bgroup
\item[$x_2$:]\
\begin{center}

\begin{tikzpicture}[
        thick,
        level/.style={level distance=2.5cm,sloped},
        level 2/.style={sibling distance=2.25cm},
        level 3/.style={sibling distance=2.75cm},
        level 4/.style={sibling distance=2cm},
        grow=right]
    \node {$x_2$}
    	child {node {$R_3$}
		child{node{$R_1$}
			child{node{$R_2$}
				child {node{$R_3$}
					child{node{$\cdots$}}
					edge from parent
					node [above] {$(a_3-a_2,\lambda_3)$}}
				edge from parent
				node [above] {$(a_2-a_1,\lambda_2)$}}
			edge from parent
			node [above] {$(a_1, \lambda_1)$}}
		child{node {$R_4$}
			child{ node{$R_3$}
				child { node{$R_1$}
					edge from parent
					node [above] {$(a_1, \lambda_1)$}}
				child{node{$R_4$}
					child { node{$R_3$}
						child{node{$R_1$}
							edge from parent
							node[above]{$(a_1, \lambda_1)$}}
						child {node{$\cdots$}}
						edge from parent
						node[above]{$(a_3-a_2,\lambda_3)$}}
					edge from parent
					node [above] {$(-a_3, \lambda_4)$}}
				edge from parent
				node[above] {$(a_3-a_2,\lambda_3)$}}
			edge from parent
			node [above] {$(-a_3,\lambda_4)$}}
		edge from parent
		node [above] {$(a_3-a_2,\lambda_3)$}}
   ;
\end{tikzpicture}
\end{center}
\egroup

\bgroup
\item[$x_3$:]\
\begin{center}

\begin{tikzpicture}[
        thick,
        level/.style={level distance=2.5cm,sloped},
        level 2/.style={sibling distance=2.75cm},
        level 3/.style={sibling distance=2cm},
        level 4/.style={sibling distance=1.75cm},
        grow=right]
    \node {$x_3$}
    	child{node {$R_4$}
		child{node{$R_3$}
			child{node{$R_1$}
				edge from parent
				node [above] {$(a_1, \lambda_1)$}}
			edge from parent
			node[above]{$(a_3-a_2,\lambda_3)$}}
		edge from parent
		node [above] {$(-a_3,\lambda_4)$}}
		   ;
\end{tikzpicture}
\end{center}
\egroup

\end{itemize}

The possible valid paths are as follows
\[(R_1), (R_1, R_2), (R_1, R_2, R_3), (R_3), (R_3, R_1), (R_4, R_3, R_1)\]
\[(R_3, R_4, R_3), \ldots, (R_3, n\times(R_4, R_3))\]
\[(R_3, R_4, R_3, R_1), \ldots, (R_3, n\times(R_4, R_3), R_1)\]
\[(R_3,R_1, R_2,R_3), \ldots, (R_3, n\times(R_1, R_2, R_3))\]

 Since $(R_1, R_2)$ always exists, $(R_1)$ will never have the smallest slope, and since $(R_4, R_3, R_1)$ always exists, none of $(R_3, R_4, R_3, R_1), \ldots, (R_3, n\times(R_4, R_3), R_1)$ nor $(R_3, R_1)$ will have smallest slope. In addition, we only need to consider $(R_3, k \times (R_4, R_3))$ corresponding to the longest valid path in its sequence, and $(R_3, k\times(R_1,R_2,R_3))$ corresponding to the shortest valid path in its sequence.

 \begin{center}
\bgroup
\def\arraystretch{2.25}
\begin{tabular}{c | c | c}
Saddle Connection & Slope & Existence Condition(s)\\
\hline
$(R_1, R_2)$ & $\dfrac{a_2}{\lambda_1+\lambda_2}$ & None\\

$(R_1, R_2, R_3)$ & $\dfrac{a_3}{\lambda_1+\lambda_2+\lambda_3}$ & $a_2 > \dfrac{(\lambda_1 + \lambda_1)a_3}{\lambda_1+\lambda_2+\lambda_3}$\\

$(R_3)$ & $\dfrac{a_3 - a_2 }{\lambda_3}$ & $a_3 > a_2$\\

$(R_3, k \times (R_4, R_3))$ & $\dfrac{a_3 -(k+1)a_2}{(k+1)\lambda_3 + k\lambda_4}$ & \parbox{6.5cm}{\vspace{.75\baselineskip}\centering $a_3 -(k+1)a_2 > 0$,\\$(k+1)\lambda_3 + k\lambda_4 \leq 1$}\\

$(R_3, k\times(R_1,R_2,R_3))$ & $\dfrac{(k+1)a_3-a_2}{k(\lambda_1+\lambda_2) + (k+1)\lambda_3}$  & \parbox{7.5cm}{\vspace{.75\baselineskip} \centering $\dfrac{(\lambda_1+\lambda_2+\lambda_3)((k+1)a_3-a_2)}{k(\lambda_1+\lambda_2) + (k+1)\lambda_3} <a_2 < 2a_3$,\\$a_3 < a_2 + \dfrac{\lambda_3((k+1)a_3-a_2)}{k(\lambda_1+\lambda_2) + (k+1)\lambda_3}$,\\$ k(\lambda_1+\lambda_2) + (k+1)\lambda_3\leq 1$}\\

$(R_4, R_3, R_1)$ & $\dfrac{a_1-a_2}{\lambda_1+\lambda_3+\lambda_4}$ & None
\end{tabular}
\egroup
\end{center}

 First, assume $a_2 \geq a_3$, in this case $(R_1, R_2, R_3)$ always exists and will beat $(R_1, R_2)$. Thus the candidates for smallest slope are $(R_1, R_2, R_3)$, $(R_3, k\times(R_1,R_2,R_3))$ and $(R_4, R_3, R_1)$. Note $(R_3, k\times(R_4, R_3))$ is not a valid option in this case since we begin by passing to the consecutive rectangle.
 
 Now assume $a_2 < a_3$, $(R_3)$ exists in this case which implies that neither $(R_1, R_2, R_3)$ nor $(R_3, k\times(R_1,R_2,R_3))$ will have smallest slope. Thus the possible candidates are $(R_3)$, $(R_1, R_2)$, $(R_3, k \times (R_4, R_3))$, and $(R_4, R_3, R_1)$.

\subsection{$\pi = (4321)$}
\subsubsection{The Poincar\'e Section}
The equalities that describe the space:
\begin{itemize}
\item $\sum\lambda_i h_i = 1$
\item $h_1 = a_1$
\item $h_4 = a_3$
\item $h_1=h_2-a_2$
\item $h_4 = h_3 - a_2$
\end{itemize}
The inequalities that describe the section:
\begin{itemize}
\item $\sum \lambda_i \leq 1$
\item $\lambda_i > 0$
\item $0 < a_1 \leq h_2$
\item $0 < a_2 \leq h_2, h_3$
\item $0 < a_3 \leq h_3$
\end{itemize}
\subsubsection{Bounding the return times} Since $h_4=a_3$, no saddle connections will begin at $x_3$.

 \begin{itemize}
  
\bgroup 
\item[$x_0$:]\
\begin{center}

\begin{tikzpicture}[
        thick,
        level/.style={level distance=2.5cm,sloped},
        level 2/.style={sibling distance=2.25cm},
        level 3/.style={sibling distance=2cm},
        level 4/.style={sibling distance=1.75cm},
        grow=right]
    \node {$x_0$}
        child  {node {$R_1$}{
                  child { node {$R_2$}
            	child{node {$R_3$}
			  edge from parent
                    node [above] {$(a_3-a_2,\lambda_3)$}}
                    edge from parent
			node [above] {$(a_2-a_1,\lambda_2)$}
            }
            edge from parent 
            node [above] {$(a_1,\lambda_1)$}
        }}    ;
\end{tikzpicture}
\end{center}
\egroup

\bgroup
\item[$x_1$:]\
\begin{center}

\begin{tikzpicture}[
        thick,
        level/.style={level distance=2.5cm,sloped},
        level 2/.style={sibling distance=3cm},
        level 3/.style={sibling distance=2.75cm},
        level 4/.style={sibling distance=1.75cm},
        level 6/.style={level distance=1cm},
        grow=right]
    \node {$x_1$}
    	child{node {$R_2$}
		child {node {$R_1$}
			child {node {$R_2$}
				child {node {$R_1$}
					child {node {$R_2$}
						child {node {$\cdots$}}
						edge from parent
						node [above] {$(a_2-a_1, \lambda_2)$}}
					edge from parent
					node[above] {$(a_1, \lambda_1)$}}
				child {node{$R_3$}
					child{node{$R_2$}
						child{node{$\cdots$}}
						edge from parent
						node[above]{$(a_2-a_1, \lambda_2)$}}
					edge from parent
					node [above] {$(a_3 - a_2, \lambda_3)$}}
				edge from parent
				node [above] {$(a_2 - a_1, \lambda_2)$}}
			edge from parent
			node [above] {$(a_1, \lambda_2)$}}
		child{node{$R_3$}
			child{node{$R_2$}
				child{node{$\cdots$}}
				edge from parent
				node [above] {$(a_2-a_1,\lambda_2)$}}
			child{node{$R_4$}
				child {node {$R_3$}
					child{node{$\cdots$}}
					edge from parent
					node [above] {$(a_3-a_2,\lambda_3)$}}
				edge from parent
				node [above] {$(a_1, \lambda_1)$}}
			edge from parent
			node[above]{$(a_3-a_2,\lambda_3)$}}
		edge from parent
		node [above] {$(a_2-a_1,\lambda_4)$}}
		   ;
\end{tikzpicture}
\end{center}
\egroup

\bgroup
\item[$x_2$:]\
\begin{center}

\begin{tikzpicture}[
        thick,
        level/.style={level distance=2.5cm,sloped},
        level 2/.style={sibling distance=2.75cm},
        level 3/.style={sibling distance=2.75cm},
        level 4/.style={level distance=1cm},
        grow=right]
    \node {$x_2$}
    	child {node {$R_3$}
		child{node {$R_4$}
			child{node {$R_3$}
				child{node{$\cdots$}}
				edge from parent
				node[above]{$(a_3-a_2, \lambda_3)$}}
			edge from parent
			node [above] {$(-a_3,\lambda_4)$}}
		edge from parent
		node[above]{$(a_3-a_2, \lambda_3)$}}
   ; 
\end{tikzpicture}
\end{center}
\egroup


\end{itemize}

The possible valid paths are as follows
\[(R_1), (R_1, R_2), (R_1,R_2,R_3), (R_2), (R_2, R_3), (R_3)\]
\[(R_2, R_3, R_4, R_3), \ldots, (R_2, R_3, n\times(R_4, R_3))\]
\[(R_2, R_3, R_2), \ldots, (R_2, n \times(R_3, R_2))\]
\[(R_2, R_1, R_2, R_3), \ldots, (R_2, R_1, n \times (R_2, R_3))\]
\[(R_2, R_1, R_2), \ldots, (R_2, n \times(R_1, R_2))\]
\[(R_3, R_4, R_3), \ldots, (R_3, n \times(R_4, R_3))\]

Note we only have to consider $(R_2, R_3, k \times(R_4, R_3))$, $(R_2, k\times (R_3, R_2))$, $(R_2, R_1, k \times(R_2, R_3))$, and $(R_3, k \times(R_4, R_3))$ corresponding to the longest valid path in their corresponding sequences, and $(R_2, k \times(R_1, R_2))$ corresponding to the shortest valid path in its sequence.

 \begin{center}
\def\arraystretch{2.25}
\begin{tabular}{c | c | c}
Saddle Connection & Slope & Existence Condition(s)\\
\hline
$(R_1)$ & $\dfrac{a_1}{\lambda_1}$ & None\\

$(R_1, R_2)$ & $\dfrac{a_2}{\lambda_1+\lambda_2}$ & $a_1 > \dfrac{\lambda_1(a_2)}{\lambda_1+\lambda_2}$\\
$(R_1, R_2, R_3)$ & $\dfrac{a_3}{\lambda_1+\lambda_2+\lambda_3}$ & $a_1 > \dfrac{\lambda_1(a_2)}{\lambda_1+\lambda_2}$, $a_2 > \dfrac{(\lambda_1+\lambda_2)a_3}{\lambda_1+\lambda_2+\lambda_3}$ \\

$(R_2)$ & $\dfrac{a_2-a_1}{\lambda_2}$ & $a_2 > a_1$\\

$(R_2, R_3)$ & $\dfrac{a_3 -a_1}{\lambda_2 + \lambda_3}$ & $a_3 > a_1$, $a_2 > \dfrac{\lambda_2(a_3 - a_1)}{\lambda_2+\lambda_1} + a_1$\\

$(R_2, R_3, k \times(R_4, R_3))$ & $\dfrac{a_3 - ka_2 - a_1}{k(\lambda_2 +\lambda_4)+ (k+1)\lambda_3}$ & \parbox{7cm}{\vspace{.75\baselineskip}\centering $a_3 > ka_2 + a_1$,\\$a_ 2> a_1 + \dfrac{\lambda_2(a_3 - ka_2 - a_1)}{k(\lambda_2 +\lambda_4)+ (k+1)\lambda_3}$, \\ $k(\lambda_2 +\lambda_4)+ (k+1)\lambda_3 \leq 1$}\\

$(R_2, k\times (R_3, R_2))$ & $\dfrac{a_2 + ka_3 - (k+1)a_1}{(k+1)\lambda_2 + k\lambda_3}$ & \parbox{7cm}{\vspace{.75\baselineskip}\centering $(k+1)a_1 < a_2 + ka_3$,\\ $a_2 > \dfrac{\lambda_2(a_2 + ka_3 - (k+1)a_1)}{(k+1)\lambda_2 + k\lambda_3} + a_1$,\\$(k+1)\lambda_2 + k\lambda_3\leq1$}\\

$(R_2, R_1, k \times (R_2, R_3))$ & $\dfrac{a_2 + ka_3 - ka_1}{\lambda_1+(k+1)\lambda_2+k\lambda_3}$ & \parbox{8cm}{\vspace{.75\baselineskip}\centering $ka_1 < a_2 + ka_3$\\$a_2 > (a_1-a_2) + \dfrac{(\lambda_1 + 2\lambda_2)(a_2 + ka_3 - ka_1)}{\lambda_1+(k+1)\lambda_2+k\lambda_3}$\\$\lambda_1+(k+1)\lambda_2+k\lambda_3 \leq 1$} \\

$(R_2, k \times(R_1, R_2))$ & $\dfrac{ka_2 - a_1}{(k-1)\lambda_1+k\lambda_2}$ & \parbox{5cm}{\vspace{.75\baselineskip}\centering $ka_2 > a_1$,\\$(k-1)\lambda_1+k\lambda_2 \leq 1$}\\

$(R_3)$ & $\dfrac{a_3 - a_2 }{\lambda_3}$ & $a_3 > a_2$\\

$(R_3, k \times(R_4, R_3))$ & $\dfrac{a_3 -ka_2}{n\lambda_3 + (k-1)\lambda_4}$ & \parbox{5cm}{\vspace{.75\baselineskip}\centering $a_3 > ka_2$,\\$k\lambda_3 + (k-1)\lambda_4 \leq 1$}\\

\end{tabular}
\end{center}

If $a_1 < a_2$, then $(R_2)$ always exists, which implies that $(R_1, R_2)$ and $(R_2, k \times(R_1, R_2))$ will never have the smallest slope.

If $a_1 < a_2 < a_3$, $(R_3)$ always exists, which implies that $(R_2, R_3)$, $(R_1, R_2, R_3)$, $(R_2, R_1, k \times (R_2, R_3))$ will never have smallest slope. Thus the candidates are $(R_1)$, $(R_2)$, $(R_2, R_3, k \times(R_4, R_3))$, $(R_2, k\times (R_3, R_2))$, $(R_3)$, and $(R_3, k \times(R_4, R_3))$.

If $a_1 < a_3 \leq a_2$, $(R_2, R_3)$ always exists which implies $(R_1, R_2, R_3)$ and $(R_2, k\times (R_3, R_2))$ will never have the smallest slopes. Thus the candidates are $(R_1)$, $(R_2)$, $(R_2, R_3)$, $(R_2, R_3, k \times(R_4, R_3))$, and $(R_2, R_1, k \times (R_2, R_3))$.

If $a_3 \leq a_1 < a_2$, then $(R_1, R_2, R_3)$ always exists, which implies that $(R_1)$, $(R_1, R_2)$ will never have smallest slope. Thus the candidates are $(R_1, R_2, R_3)$, $(R_2)$, and $(R_2, R_1, k \times (R_2, R_3))$.

If $a_1 \geq a_2$, $(R_2)$ $(R_2, R_3)$, $(R_2, R_3, k\times(R_4, R_3))$, and $(R_2, k \times(R_3, R_2))$, do not exit. $(R_1, R_2)$ always exists which implies that $(R_1)$ will never have the smallest slope.

If $a_2 \leq a_1 < a_3$ or  $a_2 < a_3 \leq a_1$, then $(R_3)$ always exists, which implies that  $(R_1, R_2, R_3)$ and $(R_2, R_1, k \times (R_2, R_3))$ do not have the smallest slope. Thus the candidates are $(R_1, R_2)$, $(R_2,k\times(R_1, R_2))$, $(R_3)$, $(R_3,k\times(R_4, R_3))$.

If $a_3 < a_2 < a_1$, $(R_3)$ and $(R_3, k \times(R_4, R_3))$ do not exist, however, $(R_1, R_2, R_3)$ always exists which implies that $(R_1, R_2)$ will never have the smallest slope. Thus the candidates are $(R_1, R_2, R_3)$, $(R_2, R_1, k \times (R_2, R_3))$, $(R_2,k\times(R_1, R_2))$.

\subsection{$\pi = (4213)$}
\subsubsection{The Poincar\'e Section}
The equalities that describe the space:
\begin{itemize}
\item $\sum\lambda_i h_i = 1$
\item $h_3 = a_3$
\item $h_4 = a_3$
\item $h_1=h_2-a_2$
\item $h_4 = h_2 - a_1$
\end{itemize}
The inequalities that describe the section:
\begin{itemize}
\item $\sum \lambda_i \leq 1$
\item $\lambda_i > 0$
\item $0 < a_1 < h_1$
\item $0 < a_2 < a_3$
\end{itemize}
\subsubsection{Bounding the return times} Since $h_4=h_3$, no saddle connections can start at $x_3$.

\begin{itemize}
\item[$x_0$:]\
\begin{center}

\begin{tikzpicture}[
        thick,
        level/.style={level distance=2.5cm,sloped},
        level 2/.style={sibling distance=2.25cm},
        level 3/.style={sibling distance=2cm},
        level 4/.style={sibling distance=1.75cm},
        grow=right]
    \node {$x_0$}
        child  {node {$R_1$}{
                  child { node {$R_2$}
            	child{node {$R_3$}
			  edge from parent
                    node [above] {$(a_3-a_2,\lambda_3)$}}
                    edge from parent
			node [above] {$(a_2-a_1,\lambda_2)$}
            }
            edge from parent 
            node [above] {$(a_1,\lambda_1)$}
        }}    ;
\end{tikzpicture}
\end{center}

\item[$x_1$:]\
\begin{center}

\begin{tikzpicture}[
        thick,
        level/.style={level distance=2.5cm,sloped},
        level 2/.style={sibling distance=2cm},
        level 3/.style={sibling distance=2.75cm},
        level 4/.style={sibling distance=2cm},
        level 6/.style={level distance=1cm},
        grow=right]
    \node {$x_1$}
    	child {node {$R_2$}
		child{node {$R_1$}
			child{node {$R_2$}
				child {node {$R_1$}
					child {node {$R_2$}
						child {node {$\cdots$}}
						edge from parent
						node [above] {$(a_2 - a_1, \lambda_2)$}}
					edge from parent
					node [above] {$(a_1, \lambda_1)$}}
				child{node{$R_3$}
					edge from parent
					node[above]{$(a_3-a_2,\lambda_3)$}}
				edge from parent
				node[above] {$(a_2-a_1,\lambda_2)$}}
			edge from parent
			node [above] {$(a_1,\lambda_1)$}}
		child{node {$R_3$}
			child{ node{$R_4$}
				child{node{$R_2$}
					child{node{$\cdots$}}
					edge from parent
					node[above]{$(a_2-a_1,\lambda_2)$}}
				edge from parent
				node[above] {$(-a_3, \lambda_4)$}}
			edge from parent
			node [above] {$(a_3-a_2,\lambda_3)$}}
		edge from parent
		node [above] {$(a_2-a_1,\lambda_2)$}}
   ;
\end{tikzpicture}
\end{center}

\item[$x_2$:]\
\begin{center}

\begin{tikzpicture}[
        thick,
        level/.style={level distance=2.5cm,sloped},
        level 2/.style={sibling distance=2.25cm},
        level 3/.style={sibling distance=2cm},
        level 4/.style={sibling distance=1.75cm},
        grow=right]
    \node {$x_2$}
    	child{node {$R_3$}
		edge from parent
		node [above] {$(a_3-a_2,\lambda_3)$}}
		   ;
\end{tikzpicture}
\end{center}

\end{itemize}

The valid paths are as follows
\[(R_1), (R_1, R_2), (R_1, R_2, R_3), (R_2), (R_2, R_3), (R_3),\]
\[(R_2, R_3, R_4, R_2), \ldots, (R_2, n\times(R_3, R_4, R_2)),\]
\[(R_2, R_3, R_4, R_2, R_3), \ldots, (R_2, R_3, n\times(R_4, R_2, R_3)),\]
\[(R_2, R_1, R_2, R_3), \ldots, (R_2, n\times(R_1, R_2), R_3),\]
\[(R_2, R_1, R_2), \ldots, (R_2, n\times(R_1, R_2))\]

Since $(R_3)$ always exists, $(R_1, R_2, R_3)$, $(R_2, R_3)$, and $(R_2, R_1, R_2, R_3), \ldots, (R_2, n\times(R_1, R_2), R_3)$ will never have the smallest slope by Lemma \ref{comp}.2. We only need to consider $(R_2, k \times(R_3, R_4, R_2)$ the longest valid path in its sequence and $(R_2, R_3, k \times(R_4, R_2, R_3)$, $(R_2, k \times(R_1, R_2), R_3)$ and $(R_2, k \times(R_1, R_2)$ the shortest valid path in their respective sequences.

\begin{center}
\bgroup
\def\arraystretch{2.25}
\begin{tabular}{c | c | c}
Saddle Connection & Slope & Existence Condition(s)\\
\hline
$(R_1)$ & $\dfrac{a_1}{\lambda_1}$ & None\\
$(R_1, R_2)$ & $\dfrac{a_2}{\lambda_1+\lambda_2}$ & $a_1 > \dfrac{\lambda_1(a_2)}{\lambda_1+\lambda_2}$\\\
$(R_2)$ & $\dfrac{a_2-a_1}{\lambda_2}$ & $a_2 > a_1$\\
$(R_2, k \times(R_3, R_4, R_2))$ & $\dfrac{a_2 - (k+1)a_1}{(k+1)\lambda_2+k\lambda_3+k\lambda_4}$&  \parbox{8cm}{\vspace{.75\baselineskip}\centering $a_2 > (k+1)a_1$, $a_2 > a_1 + \dfrac{\lambda_1(a_2 - (k+1)a_1)}{(k+1)\lambda_2+k\lambda_3+k\lambda_4}$,\\ $a_3 > a_1 + \dfrac{(\lambda_2 + \lambda_3)(a_2 - (k+1)a_1)}{(k+1)\lambda_2+k\lambda_3+k\lambda_4}$,\\ $(k+1)\lambda_2+k\lambda_3+k\lambda_4\leq1$} \\
$(R_2, R_3, k \times(R_4, R_2, R_3))$ & $\dfrac{a_3 - (k+1)a_1}{(k+1)(\lambda_2+\lambda_3)+k\lambda_4}$ & \parbox{6cm}{\vspace{.75\baselineskip}\centering $a_3 > (k+1)a_1$,\\$a_2 > a_1 + \dfrac{\lambda_1(a_3 - (k+1)a_1)}{(k+1)(\lambda_2+\lambda_3)+k\lambda_4}$,\\$a_3 > a_1 + \dfrac{(\lambda_2 + \lambda_3)(a_2 - (k+1)a_1)}{(k+1)(\lambda_2+\lambda_3)+k\lambda_4}$,\\$(k+1)(\lambda_2+\lambda_3)+k\lambda_4 \leq 1$\\}\\
$(R_2, k \times(R_1, R_2))$ & $\dfrac{(k+1)a_2-a_1}{k\lambda_1+(k+1)\lambda_2}$ & \parbox{5cm}{\vspace{.75\baselineskip}\centering $(k+1)a_2> a_1$,\\$k\lambda_1+(k+1)\lambda_2 \leq 1$}\\
$(R_3)$ & $\dfrac{a_3-a_2}{\lambda_3}$ & None\\
\end{tabular}
\egroup
\end{center}

Assume first that $a_1 < a_2$. $(R_2)$ always exists and will have smaller slope than $(R_1, R_2)$ and $(R_2, k \times(R_1, R_2))$, by Lemma \ref{comp}.2. Thus the only possible candidates for smallest slope are $(R_1)$, $(R_2)$, $(R_2, k \times(R_3, R_4, R_2))$, $(R_2, R_3, k \times(R_4, R_2, R_3))$, and $(R_3)$.

Next assume $a_1 \geq a_2$. In this case $(R_2)$ and $(R_2, k \times(R_3, R_4, R_2))$ do not exist, however $(R_1, R_2)$ always exists, which means $(R_1)$ will never have the smallest slope, by Lemma \ref{comp}.1. If  $a_1 \geq a_3 > a_2$, $(R_2, R_3, k \times(R_4, R_2, R_3))$ does not exist and so the possible candidates for smallest slope are $(R_1)$, $(R_2, k\times(R_1, R_2)$,  and $(R_3)$. If $a_3 > a_1 > a_2$, then the possible candidates for smallest slope are $(R_1)$, $(R_2, k\times(R_1, R_2)$, $(R_2, R_3, k \times(R_4, R_2, R_3))$, and $(R_3)$.

\bibliographystyle{abbrv}
\bibliography{H2Refs}

\end{document}